\pgfplotsset{compat=1.15}
\numberwithin{equation}{section}
\theoremstyle{plain}
\newtheorem{theorem}{Theorem}[section]
\newtheorem{lemma}[theorem]{Lemma}
\newtheorem{corollary}[theorem]{Corollary}
\newtheorem{proposition}[theorem]{Proposition}
\newtheorem{example}[theorem]{Example}
\begin{document}
	\title[A naive generalization of the hyperbolic and the quasihyperbolic metrics]{A naive generalization of the hyperbolic and the quasihyperbolic metrics}
	\subjclass[2020]{Primary: 30F45, 30L15, 51K05; Secondary: 30C65, 30L10, 51M10} 
	\keywords{Gaussian curvature, geodesics, hyperbolic metric, John disk, M\"{o}bius transformation, quasiconformal mapping, quasihyperbolic metric, uniform domain.}
	\date{\today}
	
	\author[B. Maji]{Bibekananda Maji}
	\address{Bibekananda Maji\\ Department of Mathematics \\
		Indian Institute of Technology Indore \\
		Simrol,  Indore,  Madhya Pradesh 453552, India.} 
	\email{bmaji@iiti.ac.in}
	
	\author[P. Naskar]{Pritam Naskar}
	\address{Pritam Naskar\\ Department of Mathematics \\
		Indian Institute of Technology Indore \\
		Simrol,  Indore,  Madhya Pradesh 453552, India.} 
	\email{naskar.pritam2000@gmail.com,   phd2201241008@iiti.ac.in }
	
	\author[S. K. Sahoo]{Swadesh Kumar Sahoo}
	\address{Swadesh Kumar Sahoo\\ Department of Mathematics \\
		Indian Institute of Technology Indore \\
		Simrol,  Indore,  Madhya Pradesh 453552, India.} 
	\email{swadesh.sahoo@iiti.ac.in}
	
	\maketitle

\begin{abstract}
Although the hyperbolic metric possesses many remarkable properties, it is not defined on arbitrary subdomains of $\mathbb{R}^n$ with $n \geq 2$. This article introduces a new hyperbolic-type metric that provides an alternative approach to this limitation. The proposed metric coincides with the hyperbolic metric on balls and half-spaces, and, quite unexpectedly, agrees with the quasihyperbolic metric in unbounded domains. We compute the density of this metric in several classical domains and discuss aspects of its curvature. Furthermore, we establish characterizations of uniform domains and John disks in terms of the newly defined metric. In addition, we investigate several geometric properties of the metric, including the existence of geodesics and the minimal length of non-trivial closed curves in multiply connected domains.
\end{abstract}

\maketitle

\section{\bf Introduction}\label{intro}
Hyperbolic geometry is one of the most iconic areas in the study of geometry and function theory. The hyperbolic metric was defined in the unit disk and the upper half-plane at its foundation. These two models were later shown to be conformally equivalent. By the Riemann Mapping Theorem, the hyperbolic metric can be extended to any simply connected planar domain, excluding the entire complex plane. Furthermore, using the universal covering map from the unit disk, the hyperbolic metric can be transferred to any hyperbolic planar domain, that is, any planar domain with at least two boundary points.

In $\mathbb{R}^n$, the hyperbolic metric is defined only in balls and half-spaces. However, unlike the planar case, it is not defined in more general subdomains of $\mathbb{R}^n$ for $n \geq 3$.

A significant development in this direction was the introduction of the quasihyperbolic metric by Gehring and Palka \cite{GP}. While this metric agrees with the hyperbolic metric in half-spaces, it does not coincide with it in balls. Therefore, identifying a general hyperbolic-type metric that aligns with the hyperbolic metric both in balls and half-spaces, or in even more general domains, is important. Notable examples addressing this challenge include the Apollonian metric \cite{Be}, the Ferrand metric \cite[p.~122]{Fe88}, and Seittenranta’s absolute ratio metric \cite{Se}, which are indeed Möbius-invariant and designed to serve this purpose.
Also, note that the Apollonian metric does not possess any geodesics \cite{HL}. In contrast, the Seittenranta and the Ferrand metrics possess geodesics, and they are nothing but circular arcs \cite{HIL}.

It is natural to seek a hyperbolic-type metric that coincides with the hyperbolic metric in balls and the quasihyperbolic metric in other domains. This motivates us to introduce a new metric that agrees with the hyperbolic metric in balls and half-spaces, and surprisingly, with the quasihyperbolic metric in unbounded domains. Moreover, we demonstrate that the new metric is bi-Lipschitz equivalent to the hyperbolic metric in all simply connected planar domains and to the quasihyperbolic metric in any bounded domain. Notably, the new metric is also geodesic.

\section{\bf Preliminaries}\label{Prel}
Throughout this article, we assume that $D$ is a proper subdomain of $\mathbb{R}^n$, $n\geq 2$, unless otherwise specified, and $|.|$ denotes the Euclidean norm. The diameter $d(D)$ of $D$ is defined by $d(D):=\sup\left\lbrace |x-y|:x,\,y\in D\right\rbrace$, which is considered as infinity for unbounded domains. For a point $z\in D$, the distance of $z$ from the boundary $\partial D$ of $D$ is defined by $\delta_D(z):=\inf\left\lbrace|z-\xi|:\xi\in \partial D \right\rbrace$. Let $\gamma$ be a rectifiable path joining $z_1$ and $z_2$ in $D$. We introduce a new length formula, namely, the $m_D$-length, which is defined by
	\begin{align}
		m_D(\gamma):=\int_{\gamma}\frac{d(D)\,|dz|}{\delta_D(z)\left(d(D)-\delta_D(z)\right)}.
	\end{align} 
We denote the density function $d(D)/[\delta_D(z)(d(D)-\delta_D(z))]$ by $m_D(z)$.
Using the above length formula, one can define a distance between two points $z_1$ and $z_2$ in the following way: 
	\begin{equation}\label{mDdefn}
			m_{D}\left(z_1,z_2\right)=\inf_{\gamma\in \Gamma_{z_1z_2}}	m_D(\gamma)=\inf_{\gamma\in \Gamma_{z_1z_2}}\int_{\gamma}m_D(z)\,|dz|,
	\end{equation}
where $\Gamma_{z_1z_2}$ denotes the set of all rectifiable paths $\gamma$ joining $z_1$ to $z_2$ in $D$. 
Note that the $m_D$-metric matches with the hyperbolic metric $h_B(z)\,|dz|$ in any ball $B=B(z_0,r)$, centered at $z_0$ and radius $r$, where $h_B(z)$ is the hyperbolic density of $B$ defined by
\begin{equation*}
	h_B(z)=\frac{2r}{r^2-|z-z_0|^2},
\end{equation*}
and the hyperbolic metric itself is defined by 
\begin{align}
	h_B(z_1,z_2):=\inf_{\gamma\in \Gamma_{z_1z_2}}\int_{\gamma}h_B(z)\,|dz|.
\end{align} 
In particular, when $\mathbb{D}:=B(0,1)\subset\mathbb{C}$, the above formula gives the standard Poincar\'{e} model of the hyperbolic metric on the unit disk. The hyperbolic metric can be defined not only for any simply connected proper subdomains of $\mathbb{C}$ (due to the Riemann Mapping Theorem (cf. \cite[p.~24]{BM})), but also for a planar domain $X$ with at least two boundary points (due to the uniformization theorem (cf. \cite[Chapter 7]{KeLa})).
The requirements of this article demand the definition of the quasihyperbolic metric:
\begin{equation}\label{kmetric}
	k_D(z_1,z_2):=\smashoperator{\inf_{\gamma\in \Gamma_{z_1z_2}}}\hspace{0.3 cm}\int_{\gamma}\frac{|dz|}{\delta_D(z)}.
\end{equation}
The reason why we stick with a bounded domain $D$ can easily be seen by calculating the limit $\lim_{d(D)\rightarrow \infty}m_D(z)|dz|$, which gives $|dz|/\delta_D(z)$. That is, in any unbounded domain, the newly defined metric $m_D$ is the same as the quasihyperbolic metric defined by \eqref{kmetric}.
Thus, the new metric $m_D$ generalizes both the hyperbolic and the quasihyperbolic metrics in a natural way.

Now, we recall some relevant, well-known results and definitions from the literature. A bounded simply connected domain $D\subset\mathbb{C}$ is said to be a $c$-John disk if there exists a constant $c\ge 1$ such that each pair of points $z_1$ and $z_2$ can be joined by a rectifiable arc $\gamma\in D$ for which 
\begin{equation}\label{doublecone condtn}
	\smashoperator{\min_{j=1,\,2}}\ell\left(\gamma\left(z_j,z\right)\right)\leq c\,\delta_D(z)
\end{equation}
holds for all $z\in \gamma$. This is also known as the double $c$-cone condition (or, twisted cone condition), and $\gamma$ is known as a double $c$-cone arc. A domain $D$ is said to be uniform if it satisfies (\ref{doublecone condtn}) and the quasi-convexity condition
\begin{align}
	\ell(\gamma)\leq c\,|z_1-z_2|.
\end{align}
It is a well known \cite{GO, Vu} result that a domain is uniform if and only if $k_D(z_1,z_2)\leq a\,j_D(z_1,z_2)$, for all $z_1,z_2\in D$ and for some constants $a\ge 1$, where the distance ratio metric $j_D$ is defined as follows:
\begin{align*}
	j_D\left(z_1,z_2\right)&=\frac{1}{2}\log\left\{\left(1+\frac{|z_1-z_2|}{\delta_D\left(z_1\right)}\right) \left(1+\frac{|z_1-z_2|}{\delta_D\left(z_2\right)}\right)\right\}.
\end{align*}
Since a John disk is more flexible than a uniform domain, one can ask whether there is any characterization of a John disk in terms of a similar metric inequality as in the case of uniform domains. The answer is given by Kim and Langmeyer in \cite{KL} that a domain $D$ is a John disk if and only if for any pair of points $z_1,\,z_2$ in $D$, the condition $h_D(z_1,z_2)\leq c\,j'_D(z_1,z_2)$ holds with some constant $c\geq 1$, where $j'_D$ is defined by
\begin{align*}
	j'_D\left(z_1,z_2\right)&=\cfrac{1}{2}\log\left\{\left(1+\cfrac{\lambda_D\left(z_1,z_2\right)}{\delta_D\left(z_1\right)}\right) \left(1+\cfrac{\lambda_D\left(z_1,z_2\right)}{\delta_D\left(z_2\right)}\right)\right\}.
\end{align*}
Note that $j'_D$ is a generalization of the distance ratio metric in the sense of replacing the Euclidean modulus by the inner distance $\lambda_D$ of $D$,  defined by
\begin{equation*}
	\lambda_D(z_1,z_2):=\inf_{\gamma\in \Gamma_{z_1z_2}}\ell(\gamma).
\end{equation*}
Also, the definition of a quasiconformal map is worth mentioning here. Let $D\subset\mathbb{R}^n$ be a domain and $f:D\rightarrow f(D)\subset\mathbb{R}^n$ be a homeomorphism. The linear dilatation of $f$ at $x\in D$ is defined by
\begin{align*}
	H(f,x):=\limsup_{r\rightarrow 0}\cfrac{\sup\left\{ |f(x)-f(y)|:|x-y|=r\right\}}{\inf\left\{ |f(x)-f(z)|:|x-z|=r\right\}}.
\end{align*}
If $H(f,x)$ is bounded by $K$, then the map $f$ is called a $K$-quasiconformal map. It is appropriate to remark that, due to Heinonen and Koskela \cite[Theorem 1.4]{HK}, the $\limsup$ in the definition of quasiconformality can be replaced with $\liminf$, and hence with only the limit.

The rest of the paper goes as follows: in Section \ref{prlm}, we first calculate the density of the metric in the annulus and the punctured unit ball and compare them with the hyperbolic metric. Then the monotonicity property and the equivalence with the quasihyperbolic metric have been studied, which will be helpful throughout the paper. Immediately, we deduce some characterizations of uniform domains and John disks in terms of the newly defined metric. Later, the quasi-invariance property of the metric has been studied under some special classes of functions, including conformal maps, in particular M\"{o}bius transformations, and quasiconformal maps. Then we studied the curvature property of the metric in the annulus and the punctured disk. Section \ref{gdcs} discusses the existence of the $m_D$-geodesics in bounded subdomains $D$ of $\mathbb{R}^n$, $n\geq 2$. In Section \ref{mltpl cnctd dmn & cmprsn}, a problem (suggested by Prof. T. Sugawa\footnote{The authors are thankful to Prof. T. Sugawa for suggesting this particular problem and bringing the article \cite{Got} to our attention during his visit to the ICSAMY conference at IIT Indore.}) related to the minimal length of a curve, which is not homotopic to a point, in a multiply connected planar domain has been studied.

Finally, the article ends with Section~\ref{Concluding remarks} containing some remarks and unsolved problems.

\section{\bf Basic examples and properties}\label{prlm}

\subsection{Comparison property}\label{example}
Here we will study the densities of the $m_D$-metric in some particular domains and compare them with the hyperbolic density.	
\begin{example}\label{ex1}
	\normalfont
Let $R>1$ and $A_{r,R}=\left\{z\in\mathbb{C}:0<r<|z|<R\right\}$. We are attempting to find the exact density function of the metric $m_{A_{r,R}}$.
\end{example}
For any $z\in A_{r,R}$, we have
\begin{align*}
	\delta_{A_{r,R}}(z)=\begin{cases}
		|z|-r, & \text{if $r<|z|\leq \cfrac{r+R}{2}$},\\
		R-|z|, & \text{if $\cfrac{r+R}{2}\leq|z|<R$}.
	\end{cases}
\end{align*}
Hence, the density function will be
\begin{align}
	m_{A_{r,R}}(z)=\begin{cases}\label{m-anulus}
		\cfrac{2R}{(|z|-r)(2R+r-|z|)}, & \text{if $r<|z|\leq \cfrac{r+R}{2}$},\\
		\hspace{0.8 cm}	\cfrac{2R}{R^2-|z|^2}, & \text{if $\cfrac{r+R}{2}\leq|z|<R$}.
	\end{cases}
\end{align}

\begin{corollary}\label{anulusmD}
	Let us denote $A_R:=A_{\frac{1}{R},R}$. Then the density is as follows:
	\begin{align*}
		m_{A_R}(z)=\begin{cases}
			\cfrac{2R^3}{(R|z|-1)(2R^2-R|z|+1)}, & \text{if $\cfrac{1}{R}<|z|\leq \cfrac{1+R^2}{2R}$},\\
			\hspace{0.8 cm}	\cfrac{2R}{R^2-|z|^2}, & \text{if $\cfrac{1+R^2}{2R}\leq|z|<R$}.
		\end{cases}
	\end{align*}
\end{corollary}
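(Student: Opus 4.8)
The plan is simply to specialize the density formula \eqref{m-anulus} obtained in Example~\ref{ex1} to the symmetric annulus $A_R = A_{1/R,\,R}$, i.e.\ to set $r = 1/R$ there and simplify. Since $R>1$ forces $1/R < R$, the set $A_{1/R,R}$ is a genuine (non-degenerate) annulus and all the computations of Example~\ref{ex1} apply verbatim with $r$ replaced by $1/R$.

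First I would rewrite the case distinction. The midpoint radius becomes $\frac{r+R}{2} = \frac{1/R + R}{2} = \frac{1+R^2}{2R}$, so the two regimes $r < |z| \le \frac{r+R}{2}$ and $\frac{r+R}{2} \le |z| < R$ turn into $\frac{1}{R} < |z| \le \frac{1+R^2}{2R}$ and $\frac{1+R^2}{2R} \le |z| < R$, exactly as in the statement. The outer regime needs nothing further, since the density $\frac{2R}{R^2-|z|^2}$ in \eqref{m-anulus} does not involve $r$.

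Next I would treat the inner regime. Substituting $r = 1/R$ into $\dfrac{2R}{(|z|-r)(2R+r-|z|)}$ and clearing the fractions $\frac1R$ inside each factor, one writes $|z| - \frac1R = \frac{R|z|-1}{R}$ and $2R + \frac1R - |z| = \frac{2R^2 - R|z| + 1}{R}$, so the denominator acquires a factor $\frac{1}{R^2}$; multiplying through gives $\dfrac{2R^3}{(R|z|-1)(2R^2 - R|z| + 1)}$, which is the claimed expression. This is the only computation required, and it is routine; there is no real obstacle, as the corollary is a direct specialization of the example rather than an independent result.
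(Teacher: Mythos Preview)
Your proposal is correct and matches the paper's approach exactly: the corollary is stated immediately after Example~\ref{ex1} with no separate proof, so the only step is the substitution $r=1/R$ in \eqref{m-anulus} and the simplification you carried out. There is nothing more to add.
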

The hyperbolic density of $A_R$ can be found using the holomorphic universal covering map $e^z$ from the vertical strip of width $2\log R$ and symmetric about the real axis \cite{BM}. Indeed, the hyperbolic density is given by
\begin{align*}
	h_{A_R}(z)=\cfrac{\pi}{2\log R}\cfrac{1}{|z|\cos\left(\cfrac{\pi\log|z|}{2\log R}\right)}.
\end{align*}
Comparing these two densities in $A_R$ is not trivial, but it can be done intuitively using the graph (see Figure \ref{A_R fig}) of the functions
\begin{align*}
	f_1(x)=\begin{cases}
		\cfrac{2R^3}{(Rx-1)(2R^2-Rx+1)}, & \text{if $\cfrac{1}{R}<x\leq \cfrac{1+R^2}{2R}$},\\
		\hspace{0.8 cm}	\cfrac{2R}{R^2-x^2}, & \text{if $\cfrac{1+R^2}{2R}\leq x<R$},
	\end{cases}
\end{align*} 
and 
\begin{align*}
	g_1(x)=\cfrac{\pi}{2\log R}\cfrac{1}{x\cos\left(\cfrac{\pi\log x}{2\log R}\right)}.
\end{align*}

\begin{figure}[b]
	\centering
	\includegraphics[scale=1]{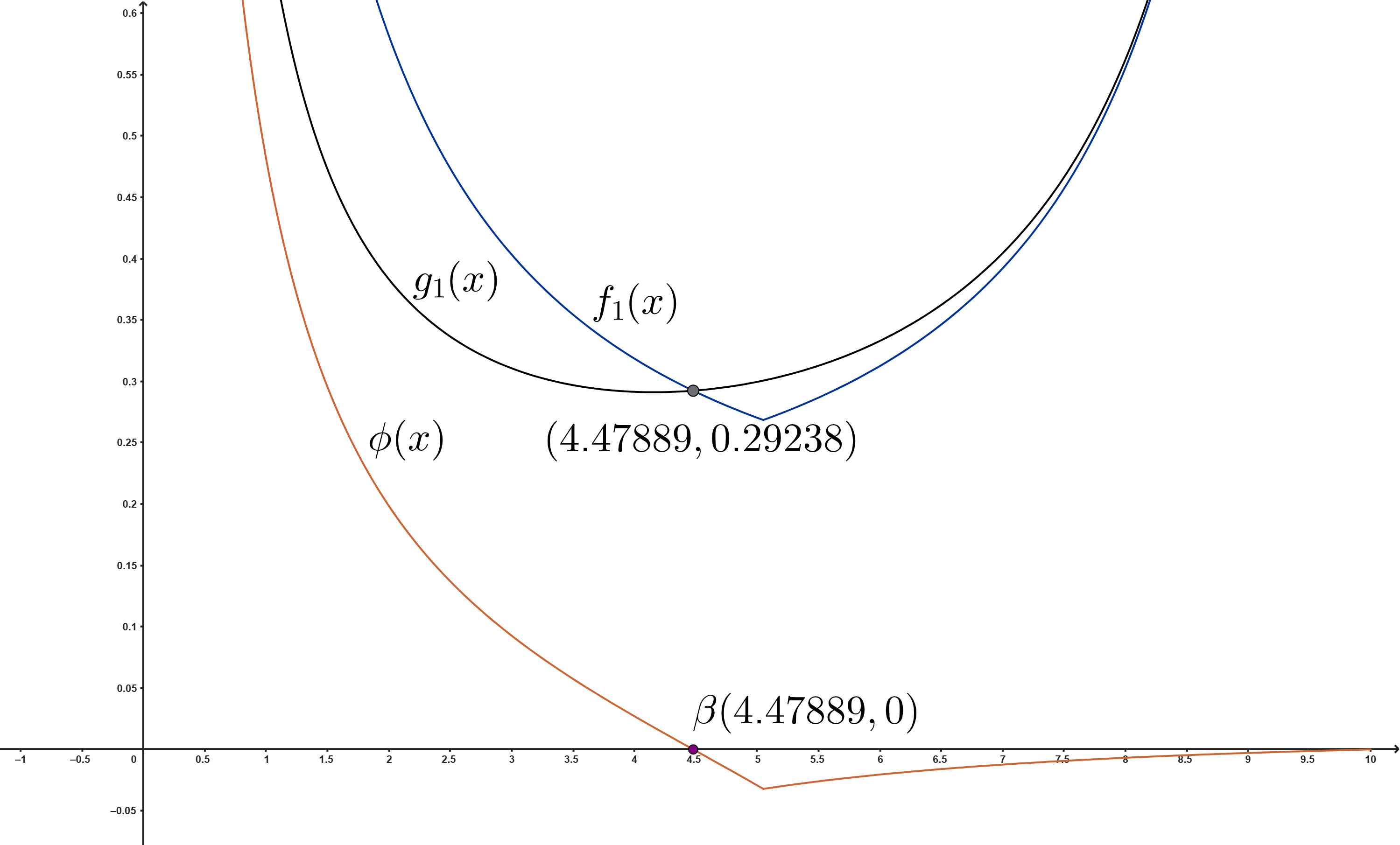}
	\caption{Graphs of $f_1(x)$, $g_1(x)$ and $\phi(x)$ with $R=10$.}
	\label{A_R fig}
\end{figure}
Here, we use a graphing calculator to visualize the densities of two metrics. Indeed, we have
\begin{align*}
	m_{A_R}(z)&\geq h_{A_R}(z), \hspace{0.2 cm}\text{for}\hspace{0.2 cm}  \cfrac{1}{R}<|z|\leq \beta,\\
	&\leq h_{A_R}(z), \hspace{0.2 cm}\text{for}\hspace{0.2 cm}  \beta\leq|z|<R,
\end{align*} 
where $\beta$ is the only positive root of the function $\phi(x)=f_1(x)-g_1(x)$ in $1/R<x<R$. 

\begin{example}\label{ex2}
	\normalfont
	For $R>0$, we set $\mathbb{D}_R:=\left\{z\in\mathbb{C}:|z|<R\right\}$. 
	Taking $r\rightarrow 0$ in (\ref{m-anulus}), the $m_D$-density for the punctured disk $\mathbb{D}_R^*=\mathbb{D}_R\setminus\left\{0\right\}$, can be obtained as
	\begin{align}\label{pncrd diks R}
		m_{\mathbb{D}_R^*}(z)=\begin{cases}
			\cfrac{2R}{|z|(2R-|z|)}, & \text{if $0<|z|\leq \cfrac{R}{2}$},\\
			\hspace{0.2 cm}	\cfrac{2R}{R^2-|z|^2}, & \text{if $\cfrac{R}{2}\leq|z|<R$}.
		\end{cases}
	\end{align}
In particular, if we take $R=1$, then the $m_D$-density in the punctured unit disk	$\mathbb{D}^*=\mathbb{D}\setminus\left\{0\right\}$ becomes
	\begin{align*}
	m_{\mathbb{D}^*}(z)=\begin{cases}
		\cfrac{2}{|z|(2-|z|)}, & \text{if $0<|z|\leq \cfrac{1}{2}$},\\
		\hspace{0.2 cm}	\cfrac{2}{1-|z|^2}, & \text{if $\cfrac{1}{2}\leq|z|<1$}.
	\end{cases}
\end{align*}
\end{example}
The hyperbolic density for $\mathbb{D}^*$ is known by the formula (see \cite[(7.17)]{KeLa})
	\begin{align*}
		h_{\mathbb{D}^*}(z)=\cfrac{1}{|z|\log(1/|z|)}.
	\end{align*}
	Considering the functions 
	\begin{align*}
		f_2(x)=\begin{cases}
		\cfrac{2}{x(2-x)},  &\text{if}  \ 0<x\leq \cfrac{1}{2},\\
		\cfrac{2}{1-x^2},  &\text{if}  \ \cfrac{1}{2}\leq x<1.  
		\end{cases}
	\end{align*}
	and $g_2(x)=\cfrac{1}{x\log(1/x)}$ and observing their graphs (Figure \ref{D_R^*fig}) one can conclude that 
	\begin{align*}
		m_{\mathbb{D}^*}(z)&\geq h_{\mathbb{D}^*}(z),\hspace{0.2 cm}\text{for}\hspace{0.2 cm}  0<|z|\leq \alpha,\\
		&\leq h_{\mathbb{D}^*}(z),\hspace{0.2 cm}\text{for}\hspace{0.2 cm}  \alpha\leq|z|< 1,
	\end{align*}
	
	\begin{figure}[h]
		\centering
		\includegraphics[scale=10]{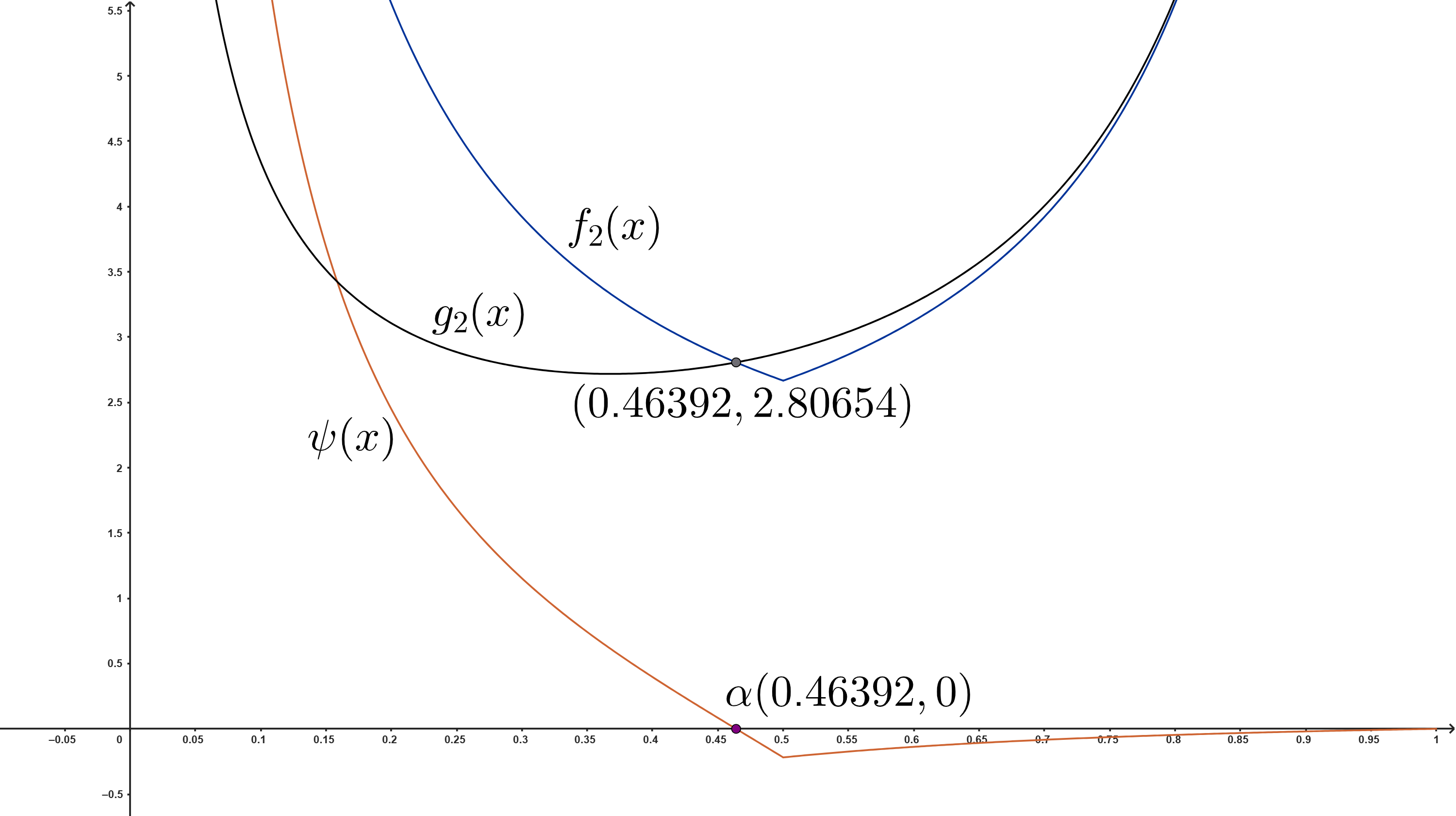}
		\caption{Graphs of $f_2(x)$, $g_2(x)$ and $\psi(x)$.}
		\label{D_R^*fig}
	\end{figure}
	where $\alpha$ is the root of the real function $\psi(x)=f_2(x)-g_2(x)$ in $0<x<1$.	
	
\subsection{Monotonicity and bi-Lipschitz properties}
For comparison of hyperbolic-type metrics, the domain monotonicity property plays a crucial role. Thus, we study the domain monotonicity property of the $m_D$-metric.
\begin{lemma}\label{monotncty}
Let $D_1$ and $D_2$ be two bounded proper subdomains of $\mathbb{R}^n$. If $D_1\subset D_2$ then $m_{D_2}(z)\leq m_{D_1}(z)$ on $D_1$, where $m_D(z)$ denotes the density of the $m_D$-metric in a bounded domain $D\subset \mathbb{C}$.
\end{lemma}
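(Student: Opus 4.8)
The plan is to reduce the statement to two elementary monotonicity facts about the one-variable behaviour of the density. Write $s:=d(D)$ and $t:=\delta_D(z)$, so that $m_D(z)=F(s,t)$ with $F(s,t)=\dfrac{s}{t\,(s-t)}$. Since the open ball $B\big(z,\delta_D(z)\big)$ lies in $D$, one has $2\delta_D(z)\le d(D)$, so the argument $t$ always lies in $(0,s/2]$; on this range $F$ is well defined, positive, and there is no cancellation in the denominator. The two facts I would establish are: (i) for fixed $s$, the map $t\mapsto F(s,t)$ is non-increasing on $(0,s/2]$, because $t\mapsto t(s-t)$ is non-decreasing there (it is a downward parabola with vertex at $t=s/2$), so its reciprocal is non-increasing; and (ii) for fixed $t$, the map $s\mapsto F(s,t)$ is decreasing on $[2t,\infty)$, which is immediate after rewriting $F(s,t)=\dfrac{1}{t\,(1-t/s)}$ and noting that $1-t/s$ increases with $s$.

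Next I would assemble the consequences of the hypothesis $D_1\subset D_2$: for every $z\in D_1$ we have $\delta_{D_1}(z)\le\delta_{D_2}(z)$ (enlarging the domain cannot bring the boundary closer) and $d(D_1)\le d(D_2)$. Fix $z\in D_1$ and abbreviate $t_i:=\delta_{D_i}(z)$, $s_i:=d(D_i)$ for $i=1,2$. First apply (i) inside $D_2$: since $t_1\le t_2\le s_2/2$, both $t_1$ and $t_2$ lie in $(0,s_2/2]$, hence $F(s_2,t_2)\le F(s_2,t_1)$. Then apply (ii) with the fixed value $t=t_1$: since $2t_1\le s_1\le s_2$, we get $F(s_2,t_1)\le F(s_1,t_1)$. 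Chaining the two inequalities yields $m_{D_2}(z)=F(s_2,t_2)\le F(s_1,t_1)=m_{D_1}(z)$, which is exactly the assertion.

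There is no deep obstacle here; the only points requiring a little care are the range conditions that make the monotonicity statements applicable — namely that each value at which $F$ is evaluated satisfies $t\le s/2$, which is automatic from $B\big(z,\delta_D(z)\big)\subset D$ — and the degenerate case $d(D_1)=d(D_2)$, in which step (ii) is simply an equality. One could instead prove (i) and (ii) by computing $\partial_t F=-s(s-2t)/\big(t(s-t)\big)^2\le 0$ and $\partial_s F=-1/(s-t)^2<0$, but the factorization argument above avoids calculus entirely.
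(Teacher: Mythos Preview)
Your proof is correct and follows essentially the same route as the paper: both reduce the claim to two one-variable monotonicity facts for the density $F(s,t)=s/\bigl(t(s-t)\bigr)$ --- decreasing in $t$ on $(0,s/2]$ and decreasing in $s$ on $[2t,\infty)$ --- and then chain them using $\delta_{D_1}(z)\le\delta_{D_2}(z)$ and $d(D_1)\le d(D_2)$. Your treatment is in fact a bit cleaner: you make the range condition $2\delta_D(z)\le d(D)$ explicit via $B\bigl(z,\delta_D(z)\bigr)\subset D$ (the paper leaves this implicit in its proof-by-contradiction phrasing), and your parabola/rewriting argument avoids the derivative computations the paper uses.
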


\begin{proof}
	We use basic properties from calculus to prove this result. For a fixed $y>0$, consider the function
	$$f(x)=\cfrac{y}{x(y-x)},\, x\in\left(0,\infty\right).$$
	Then $$f'(x)=\cfrac{y(2x-y)}{x^2(y-x)^2}.$$
	If the function $f$ is strictly increasing then we must have $2x>y.$ But the choice $y=d(D_2)$ and $x=\delta_{D_2}(z)$, leads to the contradiction that $2\delta_{D_2}(z)>d(D_2).$ So we must conclude that $f$ is a monotone decreasing function. Now, $D_1\subset D_2$ implies that $x=\delta_{D_2}(z)\geq\delta_{D_1}(z)=p$, say. Then we have $f(x)\leq f(p)=\cfrac{y}{p(y-p)}=:g(y)$. Again $$g'(y)=-\cfrac{1}{(y-p)^2}<0.
	$$
	Thus $g$ is also decreasing. So,  $y=d(D_2)\geq d(D_1)=q$, implies $g(y)<g(q)$. Hence 
	\begin{align*}
		f(x)\leq g(y)<g(q)
		\implies& \cfrac{y}{x(y-x)}<\cfrac{q}{p(q-p)}\\
		\implies &\cfrac{d(D_2)}{\delta_{D_2}(z)(d(D_2)-\delta_{D_2}(z))}<\cfrac{d(D_1)}{\delta_{D_1}(z)(d(D_1)-\delta_{D_1}(z))},
	\end{align*}
	which establishes the domain monotonicity property of the $m_D$-metric.
\end{proof}

\begin{theorem}\label{equiv k&m}
	Let $D\subsetneq\mathbb{R}^n$, $n\geq2$, be a bounded domain. Then for any $z\in D$,
	\begin{equation}
		\cfrac{1}{\delta_D(z)}\leq m_{D}(z)\leq \cfrac{2}{\delta_D(z)}.
	\end{equation} 
	That is, the metric $m_D$ is bi-Lipschitz equivalent to the quasihyperbolic metric $k_D$.
\end{theorem}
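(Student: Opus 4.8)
The plan is to reduce the density estimate to the elementary geometric fact that $2\delta_D(z) \le d(D)$ for every $z \in D$, after which everything becomes a one-line algebraic rearrangement. First I would observe that for $z \in D$ the open ball $B\bigl(z,\delta_D(z)\bigr)$ is contained in $D$ by the very definition of $\delta_D$, so that $d(D) \ge d\bigl(B(z,\delta_D(z))\bigr) = 2\delta_D(z)$. Since $D$ is bounded we have $d(D) < \infty$, and combining this with the inequality just noted gives $d(D) - \delta_D(z) \ge \delta_D(z) > 0$; in particular the density $m_D(z) = d(D)\big/\bigl[\delta_D(z)(d(D)-\delta_D(z))\bigr]$ is a well-defined positive quantity.

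Next, multiplying the desired chain $\tfrac{1}{\delta_D(z)} \le m_D(z) \le \tfrac{2}{\delta_D(z)}$ through by $\delta_D(z) > 0$, the statement is seen to be equivalent to
\begin{equation*}
	1 \;\le\; \frac{d(D)}{d(D) - \delta_D(z)} \;\le\; 2.
\end{equation*}
The left-hand inequality is immediate, because $\delta_D(z) > 0$ forces $d(D) - \delta_D(z) < d(D)$. The right-hand inequality rearranges to $d(D) \le 2\bigl(d(D) - \delta_D(z)\bigr)$, that is, to $2\delta_D(z) \le d(D)$, which is precisely the fact established in the first step. This proves the density bound.

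Finally, for the bi-Lipschitz equivalence with $k_D$, I would recall from \eqref{kmetric} that the quasihyperbolic metric has density $1/\delta_D(z)$, and that a pointwise two-sided bound between two weight functions integrates along every rectifiable path in $D$ and survives passage to the infimum over $\Gamma_{z_1 z_2}$. Hence the density inequality yields $k_D(z_1,z_2) \le m_D(z_1,z_2) \le 2\,k_D(z_1,z_2)$ for all $z_1, z_2 \in D$, which is the asserted equivalence.

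I do not anticipate a real obstacle here: the only substantive input is the inequality $2\delta_D(z) \le d(D)$, and the rest is routine. The single point worth emphasizing is that boundedness of $D$ is used essentially — it guarantees $d(D) < \infty$ so that the factor $d(D) - \delta_D(z)$ in the denominator is meaningful and positive — which is consistent with the earlier observation that $m_D$ degenerates to $k_D$ when $D$ is unbounded.
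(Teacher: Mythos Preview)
Your argument is correct. Both inequalities really do reduce to the single geometric fact $2\delta_D(z)\le d(D)$, and your reduction is clean.

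The paper arrives at the same conclusion by a slightly less direct route. For the upper bound it invokes the domain monotonicity lemma (Lemma~\ref{monotncty}) applied to the inscribed ball $B=B(z_0,\delta_D(z_0))\subset D$, obtaining $m_D(z_0)\le m_B(z_0)=2/\delta_D(z_0)$; your one-line algebraic rearrangement shows that this detour through monotonicity is unnecessary. For the lower bound the paper dresses up the inequality $d(D)/(d(D)-\delta_D(z))>1$ as a monotonicity statement about the auxiliary function $f(x)=c/(c-x)$, which is again equivalent to what you wrote but with more apparatus. One thing the paper adds and you omit is a sharpness discussion: the constant $2$ is attained when $D$ is a ball centred at $z$, and the lower constant $1$ is approached in the unit ball as $|z|\to 1$. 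This is not part of the stated theorem, so your proof is complete as it stands, but it is worth knowing that neither constant can be improved.
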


\begin{proof}
	Let $z_0\in D$ be arbitrarily fixed and $r=\delta_D(z_0)$. For the upper bound, let us consider the disk $B=B\left(z_0,r\right)\subseteq D$. Then, by Lemma \ref{monotncty}, 
	\begin{equation*}
		m_D(z)\leq m_{B}(z)=\cfrac{2r}{r^2-|z-z_0|^2}.
	\end{equation*}
	In particular, 
	\begin{equation*}
		m_D(z_0)\leq m_{B}(z_0)=\cfrac{2}{\delta_D(z_0)}.
	\end{equation*}
	Since the point $z_0$ was chosen arbitrarily, the upper bound follows. 
	The constant $2$ is the best possible, which can be obtained by taking $D$ to be a disk with center at $z$.
	
	To prove the lower bound, for some $c>0$, consider the function $f:\left(0,c/2\right]\rightarrow\left(1,2\right]$ defined by $$f(x)=\cfrac{c}{c-x}.$$
Since $f'(x)>0$ for all $x\in \left(0,c/2\right]$, $f$ is strictly increasing in $\left(0,c/2\right]$. As $x\to 0^+$, $f(x)\to 1$.  Hence, $f(x)> 1$ for all $x\in \left(0,c\right)$. Taking $c=d(D)$ and $x=\delta_D(z)$, we have the following inequality:
	\begin{align*}
		\cfrac{d(D)}{d(D)-\delta_D(z)}>1
		\implies \,m_D(z)>\cfrac{1}{\delta_D(z)}.
	\end{align*}
	For $D=B(0,1)$, it follows that
	\begin{align*}
		m_D(z)\delta_D(z)=\cfrac{2}{1-|z|^2}\left(1-|z|\right)=\cfrac{2}{1+|z|}\rightarrow 1 \,\,\text{as}\,\, |z|\rightarrow 1.
	\end{align*}
	Hence, the lower bound is concluded, and it is sharp.
	\end{proof}

\subsection{Characterization of domains} 
There are significant implications of Theorem \ref{equiv k&m} related to the characterization of some special domains, namely, uniform domains and John disks. The first result gives a characterization of uniform domains in terms of the new metric and the distance ratio metric.
\begin{proposition}
	Let $D$ be a domain in $\mathbb{R}^n,\, n\geq 2$. Then $D$ is a uniform if and only if there exists a constant $c_1\ge 1$ such that any two points $x,\,y$ in $D$
	\begin{align}
		m_D(x,y)\leq c_1\,j_D(x,y).
	\end{align}
\end{proposition}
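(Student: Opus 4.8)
The plan is to reduce the proposition to the classical characterization of uniform domains via the quasihyperbolic metric, exploiting the bi-Lipschitz equivalence of $m_D$ and $k_D$ recorded in Theorem~\ref{equiv k&m}. The first step is to upgrade the pointwise density estimate to a comparison of the two path metrics. If $D$ is bounded, then for every rectifiable path $\gamma$ joining $x$ to $y$ in $D$ the bounds $\delta_D(z)^{-1}\le m_D(z)\le 2\,\delta_D(z)^{-1}$ give
\[
\int_\gamma\frac{|dz|}{\delta_D(z)}\;\le\;\int_\gamma m_D(z)\,|dz|\;\le\;2\int_\gamma\frac{|dz|}{\delta_D(z)};
\]
taking the infimum over all such $\gamma$ (using that $f(\gamma)\le g(\gamma)$ for every $\gamma$ forces $\inf f\le\inf g$) yields
\[
k_D(x,y)\;\le\;m_D(x,y)\;\le\;2\,k_D(x,y)\qquad\text{for all }x,y\in D .
\]
If $D$ is unbounded, the same chain holds trivially, since by the observation following \eqref{kmetric} one has $m_D(z)\,|dz|=|dz|/\delta_D(z)$, i.e.\ $m_D\equiv k_D$. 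Thus in all cases $m_D$ and $k_D$ are comparable with constant $2$.

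Next I would invoke the known result (Gehring--Osgood \cite{GO}, Vuorinen \cite{Vu}) quoted in Section~\ref{Prel}: $D$ is uniform if and only if there is a constant $a\ge 1$ with $k_D(x,y)\le a\,j_D(x,y)$ for all $x,y\in D$. For the forward implication, suppose $D$ is uniform; then $m_D(x,y)\le 2\,k_D(x,y)\le 2a\,j_D(x,y)$, so the desired inequality holds with $c_1=2a\;(\ge 1)$. For the converse, suppose $m_D(x,y)\le c_1\,j_D(x,y)$ for all $x,y\in D$; then $k_D(x,y)\le m_D(x,y)\le c_1\,j_D(x,y)$, and the cited characterization gives that $D$ is uniform.

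I do not anticipate a real obstacle: once Theorem~\ref{equiv k&m} is available, the argument is a short chain of inequalities. The only points needing a little care are (i) passing from the pointwise density comparison to the comparison of the two length metrics, and (ii) handling unbounded $D$ separately — there $m_D$ coincides with $k_D$ outright, so Theorem~\ref{equiv k&m} (stated for bounded domains) is not actually needed, and the reduction to the $k_D$–$j_D$ characterization still applies because that characterization holds for arbitrary (not necessarily bounded) domains.
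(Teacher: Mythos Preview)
Your proof is correct and follows exactly the approach the paper itself takes: the paper's proof is the single sentence ``The proof follows directly from Theorem~\ref{equiv k&m} and the characterization of uniform domains due to Gehring and Osgood \cite{GO}.'' Your write-up simply fills in the two evident steps (passing from the density bounds to the metric bounds, and the unbounded case where $m_D=k_D$), so there is nothing to add.
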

 The proof follows directly from Theorem \ref{equiv k&m} and the characterization of uniform domains due to Gehring and Osgood \cite{GO}. 
 The next result is related to the characterization of John disks.
\begin{proposition}\label{smplycnctdcse}
	Let $D$ be a simply connected bounded domain in $\mathbb{C}$. Then $D$ is a John disk if and only if there exists a constant $c_2\ge 1$ such that any two points $z_1,\,z_2$ in $D$
	\begin{align}\label{rerd condn}
		m_D(z_1,z_2)\leq c_2\,j'_D(z_1,z_2).
	\end{align}
\end{proposition}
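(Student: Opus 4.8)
The plan is to deduce this from the known Kim--Langmeyer characterization of John disks, which states that $D$ is a John disk if and only if $h_D(z_1,z_2)\leq c\,j'_D(z_1,z_2)$ for all $z_1,z_2\in D$ with some $c\geq 1$. Since Proposition~\ref{smplycnctdcse} replaces $h_D$ by $m_D$, the essential point is to show that in a simply connected bounded planar domain the new metric $m_D$ and the hyperbolic metric $h_D$ are bi-Lipschitz equivalent with \emph{absolute} constants (i.e.\ constants not depending on $D$). Granting such an equivalence $A^{-1}h_D(z_1,z_2)\leq m_D(z_1,z_2)\leq A\,h_D(z_1,z_2)$, the proposition is immediate: if $D$ is a John disk then $m_D(z_1,z_2)\leq A\,h_D(z_1,z_2)\leq Ac\,j'_D(z_1,z_2)$, and conversely if $m_D\leq c_2 j'_D$ then $h_D\leq A m_D\leq Ac_2 j'_D$, so $D$ is a John disk by Kim--Langmeyer.

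The first step is therefore to establish the bi-Lipschitz equivalence of densities. By Theorem~\ref{equiv k&m} we already have $\delta_D(z)^{-1}\leq m_D(z)\leq 2\delta_D(z)^{-1}$ on any bounded $D$, so it suffices to compare $h_D(z)$ with $\delta_D(z)^{-1}$. For a simply connected proper planar domain the Koebe one-quarter theorem gives the classical two-sided estimate
\begin{equation*}
	\frac{1}{2\,\delta_D(z)}\leq h_D(z)\leq \frac{2}{\delta_D(z)},
\end{equation*}
valid with absolute constants (see \cite{BM}). Combining this with Theorem~\ref{equiv k&m} yields $\tfrac12 m_D(z)\leq h_D(z)\leq 2\,m_D(z)$ pointwise, hence, integrating along paths and taking infima, $\tfrac12 m_D(z_1,z_2)\leq h_D(z_1,z_2)\leq 2\,m_D(z_1,z_2)$ for all $z_1,z_2\in D$. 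The second step is the two short implications displayed above, invoking the Kim--Langmeyer theorem in the direction needed.

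The main obstacle is conceptual rather than computational: one must be careful that the equivalence constant between $m_D$ and $h_D$ is \emph{absolute}, since John-disk characterizations are quantitative and a domain-dependent constant would be useless here. This is guaranteed by the fact that both Theorem~\ref{equiv k&m} and the Koebe-type estimate for $h_D$ have universal constants. A minor secondary point is that $j'_D$ is defined via the inner metric $\lambda_D$, which is unaffected by the replacement of $h_D$ by $m_D$; so no adjustment of the right-hand side of \eqref{rerd condn} is needed, and the two inequalities $m_D\lesssim j'_D$ and $h_D\lesssim j'_D$ are genuinely equivalent up to absolute constants. One should also remark that the constant $c_2$ obtained is not claimed to be sharp, only that its finiteness is equivalent to the John condition.
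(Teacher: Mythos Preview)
Your approach is correct and coincides with the paper's: the paper does not write out a separate proof of this proposition but indicates (in the sentence following the uniform-domain proposition and in the example immediately after Proposition~\ref{smplycnctdcse}) that it follows from Theorem~\ref{equiv k&m} together with the equivalence of $h_D$ and $k_D$ on simply connected planar domains \cite[p.~36]{BM}, and then the Kim--Langmeyer characterization. One small numerical slip: from $m_D(z)\leq 2/\delta_D(z)$ and $h_D(z)\geq 1/(2\delta_D(z))$ you only get $h_D(z)\geq \tfrac14 m_D(z)$, not $\tfrac12 m_D(z)$, but this does not affect the argument.
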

One can see the following example to observe the importance of Proposition \ref{smplycnctdcse}.
\begin{example}
	Let $R=\left\{x+iy:-1<x<0, -1<y<0\right\}$ and $D=R\setminus\left(\mathbb{D}\cap\left\{x+iy:x,\,y<0\right\}\right)$.
\end{example}	
It has been shown in \cite{HPSW} that $D$ is not a John disk. Indeed, we can also show that by disproving (\ref{rerd condn}). Suppose, on the contrary, that for any two points $z_1,z_2$ in $D$, (\ref{rerd condn}) holds. Note that the hyperbolic metric in $D$ is equivalent to the $m_D$-metric due to the equivalence of $h_D$ and $k_D$ in any simply connected domain $D$ (cf. \cite[p.~36]{BM}) and Theorem \ref{equiv k&m}. Thus, we have $h_D(z_1,z_2)\leq c\,j'_D(z_1,z_2)$. Hence, by \cite[Theorem 4.1]{KL}, $D$ is a John disk, a contradiction.

\subsection{Quasi-invariance properties under special maps}\label{qsi-inv}
We are now interested to see certain quasi-invariance properties of the $m_{D}$-metric under some important classes of functions. It is well-known that M\"{o}bius transformations are hyperbolic isometries. Thus, if $f:\mathbb{D}\rightarrow \mathbb{D}$ is a M\"{o}bius transformation then for any $z_1,z_2\in \mathbb{D}$, $$h_{\mathbb{D}}(z_1,z_2)=h_{\mathbb{D}}(f(z_1),f(z_2)).$$
The isometries of the quasihyperbolic metric have been studied by H\"{a}st\"{o} \cite{Ha07}. The quasi-invariance properties of the quasihyperbolic metric under a conformal map have been studied in \cite{KVZ}. The following is true for the $m_D$-metric, which follows from Theorem \ref{equiv k&m} and \cite[Proposition 1.6]{KVZ}. Note that the inequality remains true even if the image domain $D':=f(D)$ is unbounded, since $m_{D'}=k_{D'}$ in this case.
\begin{proposition}\label{conf-inv}
	Let $D\subsetneq\mathbb{C}$ be a bounded domain and $f$ map conformally onto $D'=f(D)$. Then
	\begin{align*}
		\cfrac{1}{8}\,m_D(x,y)\leq m_{D'}(f(x),f(y))\leq 8\,m_D(x,y)	,	
	\end{align*}
	for all $x,\,y\in D$.
\end{proposition}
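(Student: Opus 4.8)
The plan is to deduce everything from the conformal quasi-invariance of the quasihyperbolic metric together with Theorem \ref{equiv k&m}. First I would record the integrated form of Theorem \ref{equiv k&m}: integrating the pointwise density bounds $\delta_D(z)^{-1}\le m_D(z)\le 2\delta_D(z)^{-1}$ along a rectifiable path and passing to the infimum over $\Gamma_{z_1z_2}$ gives
\begin{equation*}
	k_D(x,y)\le m_D(x,y)\le 2\,k_D(x,y)\qquad\text{for all }x,y\in D,
\end{equation*}
and likewise for $D'$ whenever $D'$ is bounded. For the upper estimate one tests $m_D(x,y)$ against a path that is almost length-minimizing for $k_D$; the lower estimate is immediate from the pointwise bound.

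Next I would invoke \cite[Proposition 1.6]{KVZ}, the conformal quasi-invariance of the quasihyperbolic metric: if $f$ maps $D$ conformally onto $D'$, then
\begin{equation*}
	\tfrac14\,k_D(x,y)\le k_{D'}(f(x),f(y))\le 4\,k_D(x,y)\qquad\text{for all }x,y\in D.
\end{equation*}
The factor $4$ here is the Koebe distortion constant: applying the Koebe one-quarter theorem to $f$ on the largest disk contained in $D$ about $z$, and to $f^{-1}$ on the largest disk contained in $D'$ about $f(z)$, yields $\tfrac14\le \delta_{D'}(f(z))/(|f'(z)|\,\delta_D(z))\le 4$, and integrating this estimate along paths and taking infima gives the displayed inequality.

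Chaining the two displays then gives the assertion at once. For $x,y\in D$ the upper bound reads
\begin{equation*}
	m_{D'}(f(x),f(y))\le 2\,k_{D'}(f(x),f(y))\le 8\,k_D(x,y)\le 8\,m_D(x,y),
\end{equation*}
and the lower bound reads
\begin{equation*}
	m_{D'}(f(x),f(y))\ge k_{D'}(f(x),f(y))\ge \tfrac14\,k_D(x,y)\ge \tfrac18\,m_D(x,y).
\end{equation*}

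The one point that needs separate care — and the only genuine obstacle — is the case in which $D'=f(D)$ is unbounded, since Theorem \ref{equiv k&m} is formulated only for bounded domains and so the bound $m_{D'}\le 2k_{D'}$ is not available on $D'$. In that case one instead uses the observation from Section \ref{Prel} that $m_{D'}=k_{D'}$ on every unbounded domain: the upper estimate becomes $m_{D'}(f(x),f(y))=k_{D'}(f(x),f(y))\le 4\,k_D(x,y)\le 4\,m_D(x,y)\le 8\,m_D(x,y)$, while the lower estimate is unchanged since it only used $m_{D'}\ge k_{D'}$ and $m_D\le 2k_D$. Hence the two-sided inequality holds in all cases, the constant $8$ appearing as the product of the factor $2$ from Theorem \ref{equiv k&m} and the Koebe factor $4$ from \cite{KVZ}.
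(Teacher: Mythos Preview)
Your proof is correct and follows exactly the approach the paper intends: combine the integrated bounds $k_D\le m_D\le 2k_D$ from Theorem \ref{equiv k&m} with the conformal quasi-invariance $\tfrac14 k_D\le k_{D'}\circ f\le 4k_D$ of \cite[Proposition 1.6]{KVZ}, and handle the unbounded image by using $m_{D'}=k_{D'}$. The paper merely asserts that the proposition follows from these two ingredients, and you have spelled out the chaining and the unbounded case in the same way the paper indicates.
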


It is expected that the bi-Lipschitz constant $8$ of Proposition~\ref{conf-inv} can be improved by considering a special class of conformal maps, viz. M\"{o}bius transformation. For this, let us first recall \cite[Lemma 2.4]{GP} due to  Gehring and Palka.
\begin{lemma}\label{k and mob}
	Let $D_1$ and $D_2$ be two proper subdomains of $\mathbb{R}^n,\, n\geq 2$. If $f:D_1\rightarrow D_2$ is a M\"{o}bius transformation then
	\begin{equation}
		\cfrac{1}{\delta_{D_1}(z)}\leq 2\,|f'(z)|\,\cfrac{1}{\delta_{D_2}(f(z))}
	\end{equation}
	for all $z\in D_1$.
\end{lemma}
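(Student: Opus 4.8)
The plan is to reduce everything to the \emph{single} model map $\iota(y)=y/|y|^{2}$, inversion in the unit sphere. If a Möbius transformation $f$ fixes $\infty$, it is a similarity, $|f'|$ is constant, and the inequality of Lemma~\ref{k and mob} is trivial (with $1$ in place of $2$). Otherwise put $a=f^{-1}(\infty)$; since $f$ maps $D_1$ onto $D_2\subseteq\mathbb{R}^n$ we have $a\notin D_1$, and $f$ factors as $f=T\circ\iota\circ S$ with $S(x)=x-a$ and $T$ a similarity. My first step is to observe that the inequality in Lemma~\ref{k and mob} is invariant under replacing $f$ by $L\circ f$ or by $f\circ L$ for a similarity $L$: from $\delta_{L(D)}(L(w))=|L'|\,\delta_D(w)$ and the chain rule, both sides of the inequality merely acquire the common factor $|L'|$. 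Hence it is enough to prove the statement for $f=\iota$ acting on a domain $\Omega\subseteq\mathbb{R}^{n}$ with $0\notin\Omega$, with image $\Omega^{\ast}=\iota(\Omega)$.

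For the inversion I would invoke the two classical identities $|\iota'(y)|=|y|^{-2}$ and $|\iota(y_1)-\iota(y_2)|=|y_1-y_2|/(|y_1|\,|y_2|)$. Fixing $z\in\Omega$ and writing $d:=\delta_{\Omega}(z)$, these reduce the desired inequality to $\delta_{\Omega^{\ast}}(\iota(z))\le 2d/|z|^{2}$; and from $B(z,d)\subseteq\Omega$ together with $0\notin\Omega$ we record that $d\le|z|$. Since $\iota$ extends to a homeomorphism of $\overline{\mathbb{R}^{n}}$ carrying $\partial\Omega$ onto $\partial\Omega^{\ast}$, it suffices to exhibit a point $\eta\in\partial\Omega\setminus\{0\}$ with $|z-\eta|/|\eta|\le 2d/|z|$: then $\iota(\eta)\in\partial\Omega^{\ast}$ and $\delta_{\Omega^{\ast}}(\iota(z))\le|\iota(z)-\iota(\eta)|=|z-\eta|/(|z|\,|\eta|)\le 2d/|z|^{2}$.

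To produce $\eta$, choose $\xi\in\partial\Omega$ with $|z-\xi|=d$. If $|\xi|\ge|z|/2$, then $\eta=\xi$ works at once. If $|\xi|<|z|/2$, then $d=|z-\xi|\ge|z|-|\xi|>|z|/2$, so $c:=2d/|z|>1$, and I would use the elementary fact (completing the square) that $\{\eta:|z-\eta|\le c|\eta|\}$ is the complement of an open Euclidean ball $B^{\ast}$ that has $0$ in its interior and $z$ in its exterior. Two cases remain. If $\Omega$ is unbounded, then $0=\iota(\infty)\in\partial\Omega^{\ast}$, so $\delta_{\Omega^{\ast}}(\iota(z))\le|\iota(z)|=1/|z|$, which is $\le 2d/|z|^{2}$ because $|z|<2d$. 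If $\Omega$ is bounded, then $\partial\Omega$ cannot be contained in $B^{\ast}$: otherwise the connected, unbounded set $\mathbb{R}^{n}\setminus\overline{B^{\ast}}$, which contains $z\in\Omega$ and avoids $\partial\Omega$, would lie in $\Omega$, contradicting boundedness; so any $\eta\in\partial\Omega\setminus B^{\ast}$ — which is automatically nonzero since $0\in B^{\ast}$ — satisfies $|z-\eta|/|\eta|\le c$, as needed.

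I expect the real difficulty to be exactly the bounded sub-case of this last dichotomy: there the nearest boundary point need not work, and the Apollonius-ball-plus-connectedness argument is what is genuinely required — and is also what forces the constant $2$ and makes it sharp (equality occurs, roughly, when the pole of $f$ sits just behind the nearest boundary point of $D_1$, e.g.\ for complements of small balls). A minor point to settle on the side is the degenerate situation in which the pole of $f$ lies on $\partial D_1$ rather than in the exterior of $D_1$; this causes no trouble, because the pole enters the argument only through the relations $0\in B^{\ast}$ and $d\le|z|$, both of which continue to hold.
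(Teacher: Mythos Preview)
Your argument is correct. The paper, however, does not supply its own proof of this lemma: it is simply quoted as \cite[Lemma~2.4]{GP} from Gehring and Palka, so there is no in-paper argument to compare against. For what it is worth, your reduction to the single inversion $\iota(y)=y/|y|^{2}$ via the factorization $f=T\circ\iota\circ S$, followed by the Apollonius-ball/connectedness dichotomy in the bounded sub-case, is essentially the classical Gehring--Palka route, and your identification of that sub-case as the one genuinely forcing the constant $2$ is accurate. One cosmetic remark on the invariance step: under \emph{post}-composition by a similarity $L$, the factor $|L'|$ does not appear on both sides but rather enters $|(L\circ f)'|$ and $\delta_{L(D_2)}$ and cancels on the right; the conclusion that the inequality is preserved is of course unaffected.
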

By Theorem \ref{equiv k&m} and Lemma \ref{k and mob}, we have the following result.
\begin{corollary}
	Let $D_1, D_2$ be proper bounded subdomains of $\mathbb{R}^n$, $n\geq 2$. If $f:D_1\rightarrow D_2$ is a M\"{o}bius transformation then
	\begin{equation*}\label{mobqinv}
		m_{D_1}(z)\leq 4\,|f'(z)|\,m_{D_2}(f(z))
	\end{equation*}
	for all $z\in D_1$.
\end{corollary}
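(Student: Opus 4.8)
The plan is to chain together the two-sided bound of Theorem~\ref{equiv k&m} with the Möbius distortion estimate of Lemma~\ref{k and mob}, using the upper bound on one side and the lower bound on the other. Concretely, I would start from the inequality $m_{D_1}(z)\le 2/\delta_{D_1}(z)$, which is the upper half of Theorem~\ref{equiv k&m} applied to the bounded domain $D_1$ at the point $z$. This reduces the problem to controlling $1/\delta_{D_1}(z)$ by $|f'(z)|\,m_{D_2}(f(z))$.

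Next I would invoke Lemma~\ref{k and mob} with the roles of $D_1,D_2$ as stated there: since $f:D_1\to D_2$ is a Möbius transformation, one has $1/\delta_{D_1}(z)\le 2\,|f'(z)|/\delta_{D_2}(f(z))$ for every $z\in D_1$. Substituting this into the previous step gives $m_{D_1}(z)\le 2\cdot 2\,|f'(z)|/\delta_{D_2}(f(z)) = 4\,|f'(z)|/\delta_{D_2}(f(z))$.

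Finally, I would apply the lower half of Theorem~\ref{equiv k&m} to the bounded domain $D_2$ at the point $f(z)$, namely $1/\delta_{D_2}(f(z))\le m_{D_2}(f(z))$, to conclude $m_{D_1}(z)\le 4\,|f'(z)|\,m_{D_2}(f(z))$, which is exactly the claimed estimate. A small point worth checking is that both $D_1$ and $D_2$ are bounded, so that $m_{D_i}$ is genuinely the new metric (not the quasihyperbolic one) and Theorem~\ref{equiv k&m} is directly applicable at both ends; this is part of the hypothesis, so no extra work is needed.

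There is no serious obstacle here: the argument is a direct three-line composition of already-established estimates, and the only mild subtlety is bookkeeping the direction of the inequalities (upper bound for $m_{D_1}$, lower bound for $m_{D_2}$) so that the factors of $2$ combine to the stated constant $4$. If one wanted, one could also remark that the constant $4$ is presumably not sharp, since it is obtained by stacking the worst cases of Theorem~\ref{equiv k&m} at both points simultaneously with the worst case of Lemma~\ref{k and mob}, but establishing sharpness or improving the constant would require a separate, more delicate analysis and is not needed for the statement as given.
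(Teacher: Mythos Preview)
Your argument is correct and is exactly the route the paper indicates: the corollary is stated immediately after Theorem~\ref{equiv k&m} and Lemma~\ref{k and mob} as a direct consequence of combining them, and your three-line chain (upper bound for $m_{D_1}$, the Gehring--Palka distortion lemma, lower bound for $m_{D_2}$) is precisely that combination.
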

\begin{proposition}\label{mobinvmd}
	Let $D_1$ and $D_2$ be two proper subdomains of $\mathbb{R}^n,\, n\geq 2$. If $f:D_1\rightarrow D_2$ is a M\"{o}bius transformation, then we have
	\begin{equation}
		\frac{1}{4}\,m_{D_1}(z_1,z_2)\leq m_{D_2}(f(z_1),f(z_2))\leq 4\,m_{D_1}(z_1,z_2)
	\end{equation}
	for all $z_1,z_2\in D_1$.
\end{proposition}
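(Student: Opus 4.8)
The plan is to derive a two-sided \emph{pointwise} comparison of the line elements under $f$ and then integrate along paths, using that the Möbius transformation $f$ carries one path family bijectively onto the other. In fact it suffices to establish the upper estimate $m_{D_2}(f(z_1),f(z_2))\le 4\,m_{D_1}(z_1,z_2)$ \emph{in full generality}: applying it to the inverse Möbius transformation $f^{-1}\colon D_2\to D_1$ at the points $f(z_1),f(z_2)$ immediately yields $m_{D_1}(z_1,z_2)\le 4\,m_{D_2}(f(z_1),f(z_2))$, which is the lower estimate.

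First I would record the pointwise bound. When $D_1,D_2$ are bounded, the Corollary preceding this statement already gives $m_{D_1}(z)\le 4\,|f'(z)|\,m_{D_2}(f(z))$ for every $z\in D_1$. For the general case I would instead combine Theorem~\ref{equiv k&m} with Lemma~\ref{k and mob}: since $m_D=k_D$ whenever $D$ is unbounded, the inequalities $\delta_D(\cdot)^{-1}\le m_D(\cdot)\le 2\,\delta_D(\cdot)^{-1}$ hold in \emph{every} proper subdomain $D$ of $\mathbb{R}^n$, while Lemma~\ref{k and mob} applied to $f$ and then to $f^{-1}$ at the point $f(z)$ (using that the conformal dilatations satisfy $|(f^{-1})'(f(z))|\,|f'(z)|=1$) gives
\begin{equation*}
	\tfrac12\,\delta_{D_1}(z)^{-1}\le |f'(z)|\,\delta_{D_2}(f(z))^{-1}\le 2\,\delta_{D_1}(z)^{-1}.
\end{equation*}
Feeding the density bounds into this yields the uniform pointwise estimate $|f'(z)|\,m_{D_2}(f(z))\le 4\,m_{D_1}(z)$ for all $z\in D_1$, valid whether or not $D_1,D_2$ are bounded.

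Next I would integrate. Fix $z_1,z_2\in D_1$ and let $\gamma\in\Gamma_{z_1z_2}$ be rectifiable. Its image $\eta:=f\circ\gamma$ is a rectifiable path in $D_2$ from $f(z_1)$ to $f(z_2)$, and conformality of $f$ gives $|\eta'(t)|=|f'(\gamma(t))|\,|\gamma'(t)|$, so that by change of variable and the pointwise bound
\begin{equation*}
	m_{D_2}(\eta)=\int_{\gamma}m_{D_2}(f(z))\,|f'(z)|\,|dz|\le 4\int_{\gamma}m_{D_1}(z)\,|dz|=4\,m_{D_1}(\gamma).
\end{equation*}
Because $f$ is a homeomorphism of $D_1$ onto $D_2$, the map $\gamma\mapsto f\circ\gamma$ is a bijection of $\Gamma_{z_1z_2}$ onto $\Gamma_{f(z_1)f(z_2)}$, so taking the infimum over $\gamma$ gives $m_{D_2}(f(z_1),f(z_2))\le 4\,m_{D_1}(z_1,z_2)$, and the lower bound follows by applying this to $f^{-1}$ as noted above.

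There is no deep obstacle here; the substance is carried entirely by the preceding Corollary and by Lemma~\ref{k and mob}. The only points demanding a little care are the reciprocity $|(f^{-1})'(f(z))|\,|f'(z)|=1$ of the conformal scaling factors (so that the estimates for $f$ and $f^{-1}$ combine with a single constant), the legitimacy of passing to the infimum, which rests on $f$ being a bijection between the two path families, and the bookkeeping needed to cover the case where $D_1$ or $D_2$ is unbounded — handled by replacing $m_D$ with $k_D$ there and invoking the universal bound $\delta_D^{-1}\le m_D\le 2\,\delta_D^{-1}$.
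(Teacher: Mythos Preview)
Your proof is correct and follows essentially the same route as the paper's: a pointwise comparison of the densities obtained from Lemma~\ref{k and mob} (via Theorem~\ref{equiv k&m}/the preceding Corollary), integrated along paths and pushed through the infimum, with the second inequality obtained by applying the first to $f^{-1}$. The only differences are cosmetic---you start from a path in $D_1$ and prove the upper bound first, while the paper starts from a path in $D_2$ and proves the lower bound first---and you are in fact a bit more careful than the paper in explicitly covering the case where one of the domains is unbounded (the Corollary, as stated, assumes both $D_1$ and $D_2$ are bounded, whereas the Proposition does not).
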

\begin{proof}
	Let $\gamma'$ be a rectifiable path joining $f(z_1)$ and $f(z_2)$ in $D_2$. Suppose $\gamma=f^{-1}(\gamma')$. Then 
	\begin{align*}
		m_{D_1}(z_1,z_2)&\leq \int_{\gamma}m_{D_1}(z) \,|dz|\\
		&\leq 4\int_{\gamma} |f'(z)|m_{D_2}(f(z))\, |dz|\\
		&=4\int_{\gamma'} m_{D_2}(z) \,|dz|,
	\end{align*}
where the second inequality follows from Corollary \ref{mobqinv}. Finally, taking infimum over all paths $\gamma'$ will give us the first half of the inequality. The upper bound can be shown in a similar fashion by changing the role of $D_1$ and $D_2$.
\end{proof}
Note that, since quasihyperbolic metric is $M^2$-quasi-invariant under an $M$-bi-Lipschitz map (\cite{KVZ}, Lemma 1.3), the metric $m_D$ is also quasi-invariant under an $M$-bi-Lipschitz map with bi-Lipschitz constant $2M^2$. Also, the following quasi-invariance property of the $m_D$-metric under quasiconformal mappings is an immediate consequence of Theorem \ref{equiv k&m} and \cite[Theorem 3]{GO}.

\begin{theorem}
	If $f$ is a $K$-quasiconformal mapping of $D_1$ to $D_2$, both are proper subdomain of $\mathbb{R}^n$, then there exists a constant $c$, depending only $n$ and $K$, with the following property:
	\begin{align*}
		m_{D_2}(f(x),f(y))\leq c\,\max\left\{m_{D_1}(x,y),m_{D_1}(x,y)^{\alpha}\right\}, \hspace{0.5 cm}\alpha=K^{1/(1-n)},	
	\end{align*}
	for all $x,\,y\in D$.
\end{theorem}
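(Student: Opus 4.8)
The plan is to treat this as a soft corollary: push the statement down to the quasihyperbolic metric, where it is precisely \cite[Theorem~3]{GO}, and then transfer it back through the two–sided comparison furnished by Theorem~\ref{equiv k&m}. So the first thing I would do is upgrade Theorem~\ref{equiv k&m} from a statement about densities to one about distances. For a bounded proper subdomain $D\subseteq\mathbb{R}^n$ the pointwise bound $1/\delta_D(z)\le m_D(z)\le 2/\delta_D(z)$ integrates along any rectifiable path and survives passage to the infimum, giving
\[
k_D(z_1,z_2)\le m_D(z_1,z_2)\le 2\,k_D(z_1,z_2)\qquad\text{for all }z_1,z_2\in D;
\]
when $D$ is unbounded the same chain is trivial because $m_D=k_D$ there. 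Thus this comparison is available for both $D_1$ and $D_2$, irrespective of boundedness.

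Next I would invoke the Gehring--Osgood theorem: since $f$ is $K$-quasiconformal there is a constant $c_0=c_0(n,K)\ge 1$ with
\[
k_{D_2}(f(x),f(y))\le c_0\,\max\bigl\{k_{D_1}(x,y),\,k_{D_1}(x,y)^{\alpha}\bigr\},\qquad \alpha=K^{1/(1-n)},
\]
for all $x,y\in D_1$. Applying the comparison inequality above to $D_2$ on the left yields $m_{D_2}(f(x),f(y))\le 2c_0\,\max\{k_{D_1}(x,y),k_{D_1}(x,y)^{\alpha}\}$. Finally, since $K\ge 1$ and $n\ge 2$ we have $0<\alpha\le 1$, so $t\mapsto t^{\alpha}$ is increasing on $(0,\infty)$; hence $k_{D_1}(x,y)\le m_{D_1}(x,y)$ gives simultaneously $k_{D_1}(x,y)\le m_{D_1}(x,y)$ and $k_{D_1}(x,y)^{\alpha}\le m_{D_1}(x,y)^{\alpha}$, whence $\max\{k_{D_1},k_{D_1}^{\alpha}\}\le\max\{m_{D_1},m_{D_1}^{\alpha}\}$ at $(x,y)$. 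Substituting this on the right produces the asserted inequality with $c=2c_0$, which depends only on $n$ and $K$.

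As for the main obstacle: there really is none of substance here — the whole content lies in Theorem~\ref{equiv k&m} and in \cite[Theorem~3]{GO}. The only points requiring genuine care are (i) the routine but necessary upgrade from the density estimate of Theorem~\ref{equiv k&m} to the distance estimate, i.e.\ checking that taking infima over path families respects the pointwise inequalities; (ii) handling the unbounded case uniformly, where one must recall that $m_D$ collapses to $k_D$ rather than attempting to quote Theorem~\ref{equiv k&m} (which is stated for bounded domains); and (iii) getting the orientation of the exponent right, namely verifying $\alpha\le 1$ so that $t\mapsto t^{\alpha}$ is monotone increasing and the last substitution goes in the correct direction.
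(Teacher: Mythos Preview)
Your proposal is correct and is precisely the argument the paper has in mind: the theorem is stated as an ``immediate consequence of Theorem~\ref{equiv k&m} and \cite[Theorem~3]{GO},'' with no further details given, and you have simply written out that consequence carefully (including the unbounded case and the monotonicity of $t\mapsto t^{\alpha}$). There is nothing to add.
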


\subsection{Curvature}\label{curvature} It is quite a tedious task to calculate the Gaussian curvature of the metric in arbitrary domains. But we calculate them for some special bounded domains that we already considered in Section \ref{example}. Let us first recall the definition of the curvature of a conformal metric. For a positive $C^2$ function $\rho$ defined on an open subset $U$ of $\mathbb{C}$, the curvature of the metric $\rho(z)|dz|$ at a point $a\in U$ is given by 
\begin{align*}
	K_{\rho}(a)=-\cfrac{\Delta\log\rho(a)}{\rho(a)^2},
\end{align*} 
where $\Delta$ denotes the Laplacian,
\begin{align*}
	\Delta=\cfrac{\partial^2}{\partial x^2}+\cfrac{\partial^2}{\partial y^2}.
\end{align*}
For our purpose, it is convenient to use the complex form of the Laplacian $\Delta$ given by
\begin{align*}
	\Delta=4\, \cfrac{\partial^2}{\partial z\partial\bar{z}}=4\, \cfrac{\partial^2}{\partial \bar{z}\partial z}.
\end{align*}

\subsubsection{\bf Curvature of the $m_D$-metric in annulus}\label{crvtr of anls} First, we see the curvature of the metric in the annulus $A_R$, whose density is already calculated in Corollary \ref{anulusmD} and is given by
\begin{align*}
	m_{A_R}(z)=\begin{cases}
		\cfrac{2R^3}{(R|z|-1)(2R^2-R|z|+1)}, & \text{if $\cfrac{1}{R}<|z|\leq \cfrac{1+R^2}{2R}$},\\
		\hspace{0.8 cm}	\cfrac{2R}{R^2-|z|^2}, & \text{if $\cfrac{1+R^2}{2R}\leq|z|<R$}.
	\end{cases}
\end{align*}
For any point $z\in A_R$ satisfying $1/R<|z|< {(1+R^2)}/{2R}$, 
\begin{align*}
	\Delta \log m_{A_R}(z)
	&=4\,\cfrac{\partial^2}{\partial\bar{z}\partial z}\log m_{A_R}(z)\\
	&=-4\,\cfrac{\partial^2}{\partial\bar{z}\partial z}\left\{\log(R|z|-1)+\log(2R^2-R|z|+1)\right\}\\
	&=-4\,\left\{\cfrac{R}{2}\,z^{-1/2}\frac{\partial}{\partial\bar{z}}\left(\frac{\bar{z}^{1/2}}{R|z|-1}\right)-\frac{R}{2}\,z^{-1/2}\frac{\partial}{\partial\bar{z}}\left(\frac{\bar{z}^{1/2}}{2R^2-R|z|+1}\right)\right\}\\
	&=-4\,\left\{-\frac{R}{4}\frac{1}{|z|(R|z|-1)^2}-\frac{R}{4}\frac{2R^2+1}{(2R^2-R|z|+1)^2}\right\}\\
	&=\cfrac{2R}{|z|}\,\frac{R^4(2+|z|^2)-4R^3|z|+R^2(3+|z|^2)-2R|z|+1}{(R|z|-1)^2(2R^2-R|z|+1)^2},
\end{align*}
and hence 
\begin{align*}
	K_{m_{A_R}}(z)&=-\cfrac{\Delta \log m_{A_R}(z)}{m_{A_R}(z)^2}\\
	&=-\cfrac{1}{2R^5|z|}\left\{R^4(2+|z|^2)-4R^3|z|+R^2(3+|z|^2)-2R|z|+1\right\}.
\end{align*}
The above expression lies in the open interval $\left({(-R^6-11R^4+R^2+3)}/{8R^4},-{1}/{R^4}\right)$. So, the metric has non-constant but bounded curvature at all the points $z\in A_R$ satisfying $1/R<|z|< {(1+R^2)}/{2R}$. For a point $z\in A_R$, satisfying $(1+R^2)/{2R}<|z|<R$, it can easily be seen that the curvature is $-1$. Note that, since the density of $m_{A_R}$-metric is not $C^2$ on the circle $|z|=(1+R^2)/{2R}$, the curvature is undetermined on this circle (see Figure~\ref{A_R fig} for $R=10$). 

\subsubsection{\bf Curvature of the $m_D$-metric in punctured disk} 
In Example \ref{ex2}, we have seen that
	\begin{align*}
		m_{\mathbb{D}_R^*}(z)=\begin{cases}
			\cfrac{2R}{|z|(2R-|z|)}, & \text{if $0<|z|\leq \cfrac{R}{2}$},\\
			\hspace{0.2 cm}	\cfrac{2R}{R^2-|z|^2}, & \text{if $\cfrac{R}{2}\leq|z|<R$}.
		\end{cases}
	\end{align*}
	For any point $z\in \mathbb{D}_R^*\cap\left\{0<|z|< \cfrac{R}{2}\right\}$, we first calculate
	\begin{align*}
		\Delta\log m_{\mathbb{D}_R^*}(z)
		=4\,\cfrac{\partial^2}{\partial\bar{z}\partial z}\log m_{\mathbb{D}_R^*}(z)
		&=4\,\cfrac{\partial^2}{\partial\bar{z}\partial z}\log\frac{2R}{|z|(2R-|z|)}\\
		&=-4\,\cfrac{\partial}{\partial\bar{z}}\left(\frac{Rz^{-1/2}\bar{z}^{1/2}-\bar{z}}{2R|z|-|z|^2}\right)\\
		&=-4\,\cfrac{(2R-|z|)(R-2|z|)-2(R-|z|)^2}{2(2R-|z|)^2}\\
		&=\cfrac{2R}{|z|(2R-|z|)^2}.
	\end{align*}
	Hence, the curvature at the point $z$ is
	\begin{align*}
		K_{m_{\mathbb{D}_R^*}}(z)=-\cfrac{\Delta\log m_{\mathbb{D}_R^*}(z)}{m_{\mathbb{D}_R^*}(z)^2}=-\cfrac{\cfrac{2R}{|z|(2R-|z|)^2}}{\left(\frac{2R}{|z|(2R-|z|)}\right)^2}=-\cfrac{|z|}{2R}\in\left(-\frac{1}{4},0\right).
	\end{align*}
It is easy to check that for any point $z\in \mathbb{D}_R^*\cap\left\{z:\cfrac{R}{2}<|z|<R\right\}$, the curvature
\begin{align*}
	K_{m_{\mathbb{D}_R^*}}(z)=-\cfrac{\Delta\log m_{\mathbb{D}_R^*}(z)}{m_{\mathbb{D}_R^*}(z)^2}=-1.
\end{align*}	
We notice that it is not possible to conclude the curvature on the circle $|z|=R/2$ due to the lack of $C^2$-differentiability of the metric $m_{\mathbb{D}_R^*}$ on this circle (see Figure~\ref{D_R^*fig} for $R=1$). On the region $\mathbb{D}_R^*\setminus\left\{z:|z|=R/2\right\}$ the metric has non-positive bounded curvature. The curvature is constant in the region $R/2<|z|<R$ and it is non-constant in the remaining region. However, the metric becomes flat near the origin.
 
\section{\bf Existence of geodesics}\label{gdcs} In this section, we show the existence of the $m_D$-geodesics, i.e., the existence of the rectifiable path joining two points for which the infimum attains in the definition \eqref{mDdefn}. Helly's theorem \cite[Corollary 14.25]{Ca} will play a crucial role in this proof. We begin with a lemma, due to Gehring and Palka \cite[Lemma 2.1]{GP}, which will help us to prove an important inequality for the $m_D$-metric.
\begin{lemma}
	If $D$ is a proper subdomain of $\mathbb{R}^n$, then 
	\begin{align}
			&k_D(x,y)\geq \log\cfrac{\delta_D(y)}{\delta_D(x)},\label{kloginqlty1}\\
			&k_D(x,y)\geq \log\left(1+\cfrac{|x-y|}{\delta_D(x)}\right),\label{kloginqlty2}
		\end{align}
	for all $x,y\in D$.
\end{lemma}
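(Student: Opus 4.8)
The statement to prove is the lemma attributed to Gehring and Palka: for a proper subdomain $D$ of $\mathbb{R}^n$,
\begin{align*}
	&k_D(x,y)\geq \log\frac{\delta_D(y)}{\delta_D(x)},\\
	&k_D(x,y)\geq \log\left(1+\frac{|x-y|}{\delta_D(x)}\right),
\end{align*}
for all $x,y\in D$.

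\textbf{Approach.} Both inequalities compare the quasihyperbolic distance against an integral quantity, so the natural plan is to take an arbitrary rectifiable path $\gamma$ joining $x$ to $y$ in $D$, bound $\int_\gamma |dz|/\delta_D(z)$ from below by the relevant logarithm, and then pass to the infimum over all such $\gamma$. The key analytic input is the fact that $\delta_D$ is $1$-Lipschitz: $|\delta_D(z)-\delta_D(w)|\leq |z-w|$ for all $z,w\in D$. This is what converts a line integral of $1/\delta_D$ into a bound on a change of $\log\delta_D$ (for the first inequality) or a change of an affine expression in $\delta_D$ (for the second).

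\textbf{First inequality.} Parametrize $\gamma:[0,1]\to D$ by arclength (or just assume it is absolutely continuous with $|\gamma'|=1$ after reparametrization, which is legitimate for a rectifiable arc of length $\ell(\gamma)$), so that $\int_\gamma |dz|/\delta_D(z)=\int_0^{\ell(\gamma)} dt/\delta_D(\gamma(t))$. The Lipschitz property gives that $t\mapsto \delta_D(\gamma(t))$ is itself $1$-Lipschitz, hence absolutely continuous with $|(\delta_D\circ\gamma)'(t)|\leq 1$ a.e. Therefore
\begin{align*}
	\int_0^{\ell(\gamma)}\frac{dt}{\delta_D(\gamma(t))}\geq \int_0^{\ell(\gamma)}\frac{|(\delta_D\circ\gamma)'(t)|}{\delta_D(\gamma(t))}\,dt\geq \left|\int_0^{\ell(\gamma)}\frac{(\delta_D\circ\gamma)'(t)}{\delta_D(\gamma(t))}\,dt\right|=\left|\log\frac{\delta_D(y)}{\delta_D(x)}\right|,
\end{align*}
and taking the infimum over $\gamma$ yields $k_D(x,y)\geq |\log(\delta_D(y)/\delta_D(x))|\geq \log(\delta_D(y)/\delta_D(x))$.

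\textbf{Second inequality.} Here one bounds below more carefully using $\delta_D(\gamma(t))\leq \delta_D(x)+|\gamma(t)-x|\leq \delta_D(x)+t$ (the first step again by the $1$-Lipschitz property, the second since $\gamma$ is arclength-parametrized from $x$). Hence
\begin{align*}
	\int_0^{\ell(\gamma)}\frac{dt}{\delta_D(\gamma(t))}\geq \int_0^{\ell(\gamma)}\frac{dt}{\delta_D(x)+t}=\log\frac{\delta_D(x)+\ell(\gamma)}{\delta_D(x)}=\log\left(1+\frac{\ell(\gamma)}{\delta_D(x)}\right)\geq \log\left(1+\frac{|x-y|}{\delta_D(x)}\right),
\end{align*}
using $\ell(\gamma)\geq |x-y|$ and monotonicity of $\log(1+s/\delta_D(x))$. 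Taking the infimum over $\gamma$ completes the proof.

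\textbf{Main obstacle.} There is no serious obstacle; the only point requiring a little care is the justification that an arbitrary rectifiable path may be reparametrized by arclength so that $\delta_D\circ\gamma$ is absolutely continuous and the fundamental theorem of calculus applies to $\log(\delta_D\circ\gamma)$ — in particular one should note $\delta_D(\gamma(t))$ stays bounded away from $0$ on the compact image, so $\log\delta_D\circ\gamma$ is Lipschitz and the chain-rule computation of its derivative is valid a.e. Since this lemma is quoted from Gehring--Palka, the paper will likely just cite it, but the sketch above is the proof one would reconstruct.
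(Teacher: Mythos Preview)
Your proof is correct, and you anticipated the situation exactly: the paper does not prove this lemma at all but merely cites it as \cite[Lemma~2.1]{GP}. The argument you give---using the $1$-Lipschitz property of $\delta_D$ together with arclength parametrization, first to control $\log\delta_D\circ\gamma$ and then via the pointwise bound $\delta_D(\gamma(t))\le\delta_D(x)+t$---is precisely the standard Gehring--Palka proof, so there is nothing further to compare.
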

Let us note that, for any two points $z_1,z_2$ in a bounded domain $D$,
\begin{align*}
	\cfrac{d(D)}{\delta_D(z)(d(D)-\delta_D(z))}=\cfrac{1}{\delta_D(z)}+\cfrac{1}{d(D)-\delta_D(z)}
\end{align*} 
gives
\begin{align}\label{mkn}
	m_D(z_1,z_2)\geq k_D(z_1,z_2)+n_D(z_1,z_2),
\end{align}
with
\begin{equation}
	n_D(x,y):=\inf_{\gamma\in \Gamma{xy}}\int_{\gamma}\cfrac{|dz|}{d(D)-\delta_D(z)}.
\end{equation}
This also defines a metric in any bounded subdomain $D\subset\mathbb{R}^n$, $n\geq2$. In order to prove the existence of the geodesics of the $m_D$-metrics, we take the help of the $n_D$-metric and deduce an inequality for the $m_D$-metric.

\begin{lemma}
	Let $D$ be a bounded subdomain of $\mathbb{R}^n$, $n\geq2$. Then for any two points $x,\,y\in D$ we have
	\begin{equation}\label{nloginequality1}
		n_D(x,y)\geq \log\cfrac{d(D)-\delta_D(y)}{d(D)-\delta_D(x)}.
	\end{equation}
\end{lemma}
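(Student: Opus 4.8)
The plan is to mimic the proof of the quasihyperbolic inequality \eqref{kloginqlty1} from Gehring--Palka, replacing the role of the boundary-distance function $\delta_D$ by the ``co-distance'' function $c - \delta_D(z)$, where we abbreviate $c := d(D)$. The key observation is that, just as $\delta_D$ is $1$-Lipschitz, so is the function $z \mapsto c - \delta_D(z)$ (with Lipschitz constant $1$), because $|\delta_D(z_1) - \delta_D(z_2)| \le |z_1 - z_2|$ for all $z_1, z_2 \in D$. Moreover, $c - \delta_D(z) > 0$ for every $z \in D$, since $\delta_D(z) < d(D)$ (indeed $\delta_D(z) \le d(D)/2 < d(D)$ in any bounded domain, as used in Lemma \ref{monotncty}), so $\log\bigl(c - \delta_D(z)\bigr)$ is well defined and locally Lipschitz on $D$.

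First I would fix $x, y \in D$ and let $\gamma \colon [0,1] \to D$ be an arbitrary rectifiable path joining $x$ to $y$, parametrized by arclength up to rescaling so that we may write the line integral as $\int_\gamma |dz| = \ell(\gamma)$. Set $\psi(t) := c - \delta_D(\gamma(t))$, a positive absolutely continuous function on $[0,1]$ with $|\psi'(t)| \le |\gamma'(t)|$ almost everywhere, by the Lipschitz property just noted. Then
\begin{align*}
	\int_\gamma \frac{|dz|}{c - \delta_D(z)}
	= \int_0^1 \frac{|\gamma'(t)|}{\psi(t)}\, dt
	\ge \int_0^1 \frac{|\psi'(t)|}{\psi(t)}\, dt
	\ge \left| \int_0^1 \frac{\psi'(t)}{\psi(t)}\, dt \right|
	= \bigl| \log \psi(1) - \log \psi(0) \bigr|
	= \left| \log \frac{c - \delta_D(y)}{c - \delta_D(x)} \right|,
\end{align*}
which is in particular at least $\log\bigl[(c - \delta_D(y))/(c - \delta_D(x))\bigr]$. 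Taking the infimum over all rectifiable $\gamma \in \Gamma_{xy}$ yields $n_D(x,y) \ge \log\bigl[(d(D) - \delta_D(y))/(d(D) - \delta_D(x))\bigr]$, which is exactly \eqref{nloginequality1}.

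The only genuinely delicate points are bookkeeping ones: justifying the change of variables so that the path integral becomes an honest one-dimensional integral (standard for rectifiable curves, using the arclength parametrization), and confirming that $t \mapsto \delta_D(\gamma(t))$ is absolutely continuous with $|\tfrac{d}{dt}\delta_D(\gamma(t))| \le |\gamma'(t)|$ a.e. --- this is immediate from the $1$-Lipschitz continuity of $\delta_D$ composed with a rectifiable (hence absolutely continuous in the arclength parameter) path. I do not anticipate any real obstacle here; the main thing to be careful about is that one never divides by zero, which is guaranteed since $d(D) - \delta_D(z)$ is bounded below by $d(D)/2 > 0$ on $D$. One could also remark, in parallel with the Gehring--Palka lemma, that the same argument with $\psi(t) = d(D) - \delta_D(\gamma(t))$ combined with $|\gamma'(t)| \ge |\psi'(t)|$ gives nothing sharper than stated, so \eqref{nloginequality1} is the natural analogue of \eqref{kloginqlty1} for the $n_D$-metric.
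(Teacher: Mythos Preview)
Your proof is correct but follows a genuinely different route from the paper's. The paper mimics the original Gehring--Palka argument by a discretization: fixing a path $\gamma$, they define $f(t)=d(D)-\delta_D(\gamma(t))$, choose a geometric partition $c_j=\alpha(\beta/\alpha)^{j/m}$ of the range $[\alpha,\beta]=[f(0),f(1)]$, find the first times $t_j$ at which $f$ hits $c_j$, estimate $\int_{\gamma_j} n_D(z)\,|dz|\ge (c_j-c_{j-1})/c_j$ on each subarc via the triangle inequality, sum, and let $m\to\infty$. Your argument bypasses all of this by working continuously: the $1$-Lipschitz property of $\delta_D$ makes $\psi(t)=d(D)-\delta_D(\gamma(t))$ absolutely continuous along the arclength parametrization with $|\psi'|\le|\gamma'|$ a.e., so the fundamental theorem of calculus applied to $\log\psi$ gives the bound in one line. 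Your approach is shorter and conceptually cleaner, and also yields the two-sided estimate $n_D(x,y)\ge\bigl|\log\bigl[(d(D)-\delta_D(y))/(d(D)-\delta_D(x))\bigr]\bigr|$ for free; the paper's partition method, on the other hand, is more hands-on and avoids invoking absolute continuity machinery, which may be preferable in a self-contained exposition. Both proofs ultimately rest on the same Lipschitz bound for $\delta_D$, so neither is deeper than the other.
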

\begin{proof}
	Let us fix two points $x,\,y\in D$ arbitrarily and choose a rectifiable path $\gamma$ in $D$ which is parameterized over $\left[0,1\right]$ with $\gamma(0)=x$ and $\gamma(1)=y$. Set $\alpha=d(D)-\delta_D(x)$ and $\beta=d(D)-\delta_D(y)$. Without loss of generality, one can assume that $\alpha\leq\beta$. Let us define a function $f:[0,1]\to \mathbb{R}$ by 
$$
f(t)=d(D)-\delta_D(\gamma(t)).
$$ 
For a fixed positive integer $m$, set $t_j$ to be the first point in $\left[0,1\right]$ such that 
	\begin{align}\label{f(t)=c_j=alpha,beta}
	f(t_j)=c_j=\alpha\left(\cfrac{\beta}{\alpha}\right)^{j/m}.
    \end{align} 
	Then we obtain a partition of a segment of $\left[0,1\right]$, viz. $0=t_0<t_1<\dots<t_m\leq 1$ with the properties that $f(t_j)=c_j$ and $f(t)\leq c_j$, for any $t\in \left[0,t_j\right]$. Writing $\gamma_j:=\gamma|_{\left[t_{j-1},t_j\right]}$, we have
	\begin{align*}
    \int_{\gamma_j}n_D(z)\,|dz|=\int_{t_{j-1}}^{t_j}\cfrac{1}{f(t)}\,|d\gamma(t)|&\geq \cfrac{1}{c_j}\int_{t_{j-1}}^{t_j}|d\gamma(t)|\\
	&\geq\cfrac{1}{c_j}|\gamma(t_j)-\gamma(t_{j-1})|\\
	&\geq\cfrac{1}{c_j}\left||\gamma(t_j)-p_{j-1}|-\delta_D(\gamma\left(t_{j-1}\right))\right|.
	\end{align*}
   In the last inequality, we used the reversed triangle inequality, where $p_{j-1}\in\partial D$ with $\delta_D(\gamma(t_{j-1}))=|\gamma(t_{j-1})-p_{j-1}|$. Hence,
	\begin{align*}
		\int_{\gamma_j}n_D(z)\,|dz|&\geq\cfrac{1}{c_j}\,\left|\delta_D\left(\gamma\left(t_j\right)\right)-\delta_D\left(\gamma\left(t_{j-1}\right)\right)\right|\\
		&=\,\cfrac{1}{c_j}\left|\left\{d(D)-\delta_D\left(\gamma\left(t_j\right)\right)\right\}-\left\{d(D)-\delta_D\left(\gamma\left(t_{j-1}\right)\right)\right\}\right|\\
		&\geq\,\cfrac{1}{c_j}\left\{f\left(t_j\right)-f\left(t_{j-1}\right)\right\}\\
		&=\cfrac{c_j-c_{j-1}}{c_j}.
	\end{align*}
Now, utilizing \eqref{f(t)=c_j=alpha,beta}, we have
\begin{align*}
	\int_{\gamma_j}n_D(z)|dz|\geq \cfrac{c_j-c_{j-1}}{c_j}&=\left(\cfrac{\alpha}{\beta}\right)^{1/m}\left[\left(\cfrac{\beta}{\alpha}\right)^{1/m}-1\right]\\
	&\geq\left(\cfrac{\alpha}{\beta}\right)^{1/m}\log\left(\cfrac{\beta}{\alpha}\right)^{1/m}\\
	&=\left(\cfrac{\alpha}{\beta}\right)^{1/m}\log\left(\cfrac{c_j}{c_{j-1}}\right).
\end{align*}
Therefore, we obtain
\begin{align*}
	\int_{\gamma}n_D(z)|dz|\geq\sum_{j=1}^{m}\int_{\gamma_j}n_D(z)|dz|\geq\left(\cfrac{\alpha}{\beta}\right)^{1/m}\sum_{j=1}^{m}\log\left(\cfrac{c_j}{c_{j-1}}\right)=\left(\cfrac{\alpha}{\beta}\right)^{1/m}\log\left(\cfrac{\beta}{\alpha}\right),
\end{align*}
and letting $m\rightarrow\infty$, we see that
\begin{equation*}
	\int_{\gamma}n_D(z)|dz|\geq \log\left(\cfrac{d(D)-\delta_D(y)}{d(D)-\delta_D(x)}\right).
\end{equation*}
Finally, taking infimum over all curves joining $x,y$ in $D$ gives the required inequality.
\end{proof}
\begin{corollary}\label{mloginequalitycor}
		Let $D\subsetneq\mathbb{R}^n$ be a bounded domain. Then for any two points $x,\,y\in D$, we have
	\begin{equation}\label{mloginqlty1}
		m_D(x,y)\geq \log\left(\cfrac{\delta_D(y)\left(d(D)-\delta_D(y)\right)}{\delta_D(x)\left(d(D)-\delta_D(x)\right)}\right).
	\end{equation}
\end{corollary}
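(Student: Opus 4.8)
The plan is to read off \eqref{mloginqlty1} directly from the decomposition \eqref{mkn} together with the two logarithmic lower bounds already available for $k_D$ and $n_D$. Fix $x,y\in D$. The starting point is the inequality
\[
	m_D(x,y)\ \geq\ k_D(x,y)+n_D(x,y),
\]
which is precisely \eqref{mkn} and rests on the pointwise identity $m_D(z)=1/\delta_D(z)+1/(d(D)-\delta_D(z))$; here finiteness of $d(D)$, part of the hypothesis, is what makes $n_D$ (and hence the splitting) meaningful.

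Next I would estimate each summand from below. For the quasihyperbolic term, \eqref{kloginqlty1} gives $k_D(x,y)\geq\log\!\big(\delta_D(y)/\delta_D(x)\big)$. For the $n_D$-term, the inequality \eqref{nloginequality1} just established gives $n_D(x,y)\geq\log\!\big((d(D)-\delta_D(y))/(d(D)-\delta_D(x))\big)$. Adding these two estimates and combining logarithms via $\log a+\log b=\log(ab)$ produces exactly the right-hand side of \eqref{mloginqlty1}. No positivity of the individual logarithms is required: both \eqref{kloginqlty1} and \eqref{nloginequality1} hold for an arbitrary ordered pair of points (the sign of the logarithm adjusting itself under a swap of $x$ and $y$), so their sum is a legitimate lower bound irrespective of whether $\delta_D(x)$ is larger or smaller than $\delta_D(y)$.

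There is essentially no obstacle here, which is why the statement is a corollary; the only point deserving a moment's care is simply to invoke \eqref{mkn} and the earlier two lemmas with the correct orientation. If one preferred a self-contained argument, one could instead repeat the telescoping/partition scheme of the preceding lemma with the full density $m_D(z)$ in place of $n_D(z)$, choosing the breakpoints $t_j$ so that the quantity $\delta_D(\gamma(t_j))\big(d(D)-\delta_D(\gamma(t_j))\big)$ runs through a geometric progression between its endpoint values — this is legitimate because $\delta_D(z)\le d(D)/2$ on $D$, so $s\mapsto s(d(D)-s)$ is increasing along the relevant range — but the additive route through $k_D$ and $n_D$ is shorter and is the one I would present.
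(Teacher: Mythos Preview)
Your proposal is correct and follows precisely the paper's own argument: combine \eqref{mkn} with the logarithmic lower bounds \eqref{kloginqlty1} and \eqref{nloginequality1}, then add. The additional remarks you make (about orientation of the inequalities and the alternative self-contained partition argument) are sound but go beyond what the paper records, which is a one-line proof.
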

\begin{proof}
	It is easy to see that the use of (\ref{kloginqlty1}) and (\ref{nloginequality1}) in (\ref{mkn}) gives this crude but useful lower bound for the $m_D$-metric.
\end{proof}
Finally, we gathered all the necessary tools for the proof of the existence of $m_D$-geodesics.
\begin{theorem}\label{mgeodscs}
	Let $D$ be a bounded subdomain of $\mathbb{R}^n$ and $x_1,x_2\in D$. Then there always exists an $ m_D$-geodesic $\gamma$ joining them in $D$.
\end{theorem}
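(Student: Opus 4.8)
The plan is to run the classical direct method: choose an $m_D$-length minimizing sequence of paths joining $x_1$ and $x_2$, use Corollary~\ref{mloginequalitycor} to confine the whole sequence to a fixed compact subset of $D$, reparametrize so that the sequence is uniformly Lipschitz, extract a uniformly convergent subsequence by Helly's theorem, and finally verify that the limit path attains the infimum by lower semicontinuity of the $m_D$-length. We may assume $x_1\neq x_2$, the case $x_1=x_2$ being trivial.

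Write $L:=m_{D}(x_1,x_2)$ and pick rectifiable paths $\gamma_k\colon[0,1]\to D$ with $\gamma_k(0)=x_1$, $\gamma_k(1)=x_2$ and $m_D(\gamma_k)\le L+1/k\le L+1$. The decisive step is to keep the $\gamma_k$ away from $\partial D$. For every $t$, the subarc of $\gamma_k$ from $x_1$ to $\gamma_k(t)$ has $m_D$-length at most $L+1$, hence $m_D(x_1,\gamma_k(t))\le L+1$; applying Corollary~\ref{mloginequalitycor} together with the same inequality with the two points interchanged gives
$$\Bigl|\log\frac{\delta_D(\gamma_k(t))\bigl(d(D)-\delta_D(\gamma_k(t))\bigr)}{\delta_D(x_1)\bigl(d(D)-\delta_D(x_1)\bigr)}\Bigr|\le L+1,$$
so that $\delta_D(\gamma_k(t))\bigl(d(D)-\delta_D(\gamma_k(t))\bigr)\ge c_0$ for a constant $c_0>0$ independent of $k$ and $t$; since $d(D)-\delta_D(\gamma_k(t))\le d(D)$, this forces $\delta_D(\gamma_k(t))\ge c_0/d(D)=:\delta_0>0$. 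Thus each $\gamma_k$ lies in the compact set $K:=\{z\in\overline D:\delta_D(z)\ge\delta_0\}\subset D$. On $K$, Theorem~\ref{equiv k&m} yields $1/d(D)\le m_D(z)\le 2/\delta_0$, so after reparametrizing each $\gamma_k$ by a constant multiple of Euclidean arc length over $[0,1]$ its Euclidean length satisfies $\ell(\gamma_k)=\int_{\gamma_k}|dz|\le d(D)\,m_D(\gamma_k)\le d(D)(L+1)=:\Lambda$, whence $|\gamma_k(s)-\gamma_k(t)|\le\Lambda\,|s-t|$ for all $s,t$. The family $\{\gamma_k\}$ is therefore uniformly bounded and uniformly Lipschitz, so Helly's theorem \cite[Corollary 14.25]{Ca} (an Arzel\`{a}--Ascoli type selection) gives a subsequence $\gamma_{k_j}$ converging uniformly on $[0,1]$ to a $\Lambda$-Lipschitz path $\gamma\colon[0,1]\to K$ with $\gamma(0)=x_1$, $\gamma(1)=x_2$; in particular $\gamma$ is rectifiable.

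It remains to show $m_D(\gamma)\le L$. Since $\delta_D$ is continuous and bounded away from $0$ and from $d(D)$ on $K$, the density $m_D$ is continuous, hence uniformly continuous, on $K$, and $\gamma$ together with all $\gamma_{k_j}$ lie in $K$; the standard lower semicontinuity of a weighted length then gives $\int_{\gamma}m_D(z)\,|dz|\le\liminf_j\int_{\gamma_{k_j}}m_D(z)\,|dz|=L$. Concretely one fixes $\varepsilon>0$, chooses a partition $0=s_0<\cdots<s_N=1$ fine enough that $\operatorname{diam}\gamma([s_{i-1},s_i])$ is small, uses uniform continuity of $m_D$ on $K$ and $\|\gamma_{k_j}-\gamma\|_\infty\to0$ to bound $\int_{\gamma_{k_j}}m_D\,|dz|$ below by $\sum_i\bigl(m_D(\gamma(s_{i-1}))-\varepsilon\bigr)\,|\gamma_{k_j}(s_i)-\gamma_{k_j}(s_{i-1})|$, lets $j\to\infty$, refines the partition, and lets $\varepsilon\to0$. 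Combining $m_D(\gamma)\le L$ with the trivial $m_D(\gamma)\ge m_D(x_1,x_2)=L$ shows that $\gamma$ is an $m_D$-geodesic.

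The main obstacle is the compactness step, i.e.\ keeping the minimizing sequence uniformly away from $\partial D$: this is precisely where the particular form of the density $m_D$ is used, and it is handled by the logarithmic estimate of Corollary~\ref{mloginequalitycor}. Note that, in contrast with the quasihyperbolic case, one must simultaneously prevent $\delta_D(\gamma_k(t))$ from becoming small and control the factor $d(D)-\delta_D(\gamma_k(t))$, but the product bound $\delta_D(\gamma_k(t))\bigl(d(D)-\delta_D(\gamma_k(t))\bigr)\ge c_0$ takes care of both at once. The remaining ingredients (the Arzel\`{a}--Ascoli/Helly selection and lower semicontinuity of length) are routine.
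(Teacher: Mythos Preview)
Your argument is correct and follows the same strategy as the paper's: take a minimizing sequence, use Corollary~\ref{mloginequalitycor} to control it, and extract a convergent subsequence via Helly's theorem \cite[Corollary~14.25]{Ca}. Your version is in fact more complete than the paper's in two places---you use the two-sided form of Corollary~\ref{mloginequalitycor} to confine the curves to a fixed compact $K\subset D$ (so the limit path is guaranteed to stay in $D$), and you spell out the lower semicontinuity of the $m_D$-length under uniform convergence---both of which the paper leaves implicit.
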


\begin{proof}
	Let us arbitrarily fix two points $x_1,x_2\in D$. Considering the definition of $m_D$-metric and the infimum property, we see that there exists a sequence of rectifiable arcs $\left\{\gamma_i\right\}\subset \Gamma_{x_1x_2}$ such that
\begin{equation*}
		m_{D}\left(x_1,x_2\right)=\lim_{i\rightarrow\infty} \int_{\gamma_i}m_D(z)\,|dz|.
\end{equation*}	
Note that each $\gamma_i$ is arc length parametrized on the same interval $\left[0,1\right]$. Since the limit of the sequence exists, it is bounded, and we denote it by
\begin{equation*}
	\eta:=\sup_{i}\int_{\gamma_i}m_D(z)\,|dz|<\infty.
\end{equation*}
If $x\in \gamma_j$, for some $j$, then Corollary \ref{mloginequalitycor} gives
\begin{equation*}
	\log\left(\cfrac{\delta_D(x)\left(d(D)-\delta_D(x)\right)}{\delta_D(x_1)\left(d(D)-\delta_D(x_1)\right)}\right)\leq m_D(x_1,x)\leq\int_{\gamma_i}m_D(z)\,|dz|\leq\eta,
\end{equation*}
which implies
\begin{align*}
	\delta_D(x)\left(d(D)-\delta_D(x)\right)\leq e^{\eta}\,\delta_D(x_1)\left(d(D)-\delta_D(x_1)\right).
\end{align*}
Therefore, for each $\gamma_j$, we see that
\begin{align*}
	\ell\left({\gamma_j}\right)=\int_{\gamma_j}|dz|&\leq \cfrac{e^{\eta}}{d(D)}\,\delta_D(x_1)\left(d(D)-\delta_D(x_1)\right)\int_{\gamma_j}m_D(z)\,|dz|\\
	&\leq\cfrac{\eta e^{\eta}}{d(D)}\,\delta_D(x_1)\left(d(D)-\delta_D(x_1)\right).
\end{align*}
This shows that the Euclidean length of $\gamma_j$ is uniformly bounded. Hence, the sequence of infinitesimal length elements is of bounded variation, and the sequence of total variations $\left\{V(|dz|)\right\}_j=\left\{\ell\left({\gamma_j}\right)\right\}$ forms a uniformly bounded sequence. Then the use of Helly's theorem \cite[Corollary 14.25]{Ca} gives the existence of a subsequence $\left\{j_k\right\}$ of the indexing set such that $\gamma_{j_k}$ converges pointwise to a rectifiable path $\gamma$ joining $x_1,x_2$ in $D$ with the following:
\begin{align*}
	m_D(x_1,x_2)=\lim_{k\rightarrow\infty}\int_{\gamma_{j_k}}m_D(z)\,|dz|=\int_{\gamma}m_D(z)\,|dz|.
\end{align*}
This completes the proof.
\end{proof}
 
\section{\bf The $m_D$-length in multiply connected domains}\label{mltpl cnctd dmn & cmprsn}
In this section, we consider a multiply connected proper subdomain $D$ of $\mathbb{C}$. A closed curve $\gamma\subset D$ is called non-trivial if it is not homotopic to a point. We are interested in finding the least $m_D$-length of such curves. The proposed problem is equivalent to finding the best possible constant $C$ satisfying
\begin{align*}
	\int_{\gamma}m_D(z)|dz|\geq C,
\end{align*}
 for all non-trivial closed curves $\gamma$ in $D$. A similar study for the Hahn metric can be found in \cite{Got}. For any multiply connected planar domain $D$, there always exists a punctured plane $\mathbb{C}^*_a:=\mathbb{C}\setminus \left\{a\right\}$ containing $D$. Indeed, we choose the point $a$ to be any point situated inside one of the holes of $D$ surrounded by the path $\gamma$. The above discussion furnishes the following proposition.
 
\begin{proposition}
Let $\gamma$ be a non-trivial closed curve in a multiply connected planar domain $D$. Then
\begin{align}
\int_{\gamma}m_D(z)|dz|\geq 2\pi.
\end{align}
The equality holds for 
$D=\mathbb{D}_R^*$ and $\gamma=\left\{z:|z|=r,~ 0<r\le R/2\right\}$ with either $R$ arbitrarily large or $r$ arbitrarily small.
\end{proposition}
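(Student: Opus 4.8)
\emph{Proof proposal.} The plan is to reduce the bound to a winding-number estimate for the quasihyperbolic density of a once-punctured plane, exactly as the discussion preceding the statement suggests. First, since $\gamma$ is not homotopic to a point in $D$, I would invoke the standard topological fact that a loop in a plane domain whose winding number vanishes around every point of the complement is null-homotopic; its contrapositive produces a point $a\in\mathbb{C}\setminus D$ with $n(\gamma,a)\neq 0$, hence $|n(\gamma,a)|\geq 1$. Fix such an $a$. Because $a\notin D$, for each $z\in D$ the segment $[z,a]$ must meet $\partial D$, so $\delta_D(z)\leq|z-a|$. Next I would note that $m_D(z)\geq 1/\delta_D(z)$ for every $z\in D$: for bounded $D$ this is the lower bound in Theorem \ref{equiv k&m} (equivalently, $m_D(z)=1/\delta_D(z)+1/(d(D)-\delta_D(z))\geq 1/\delta_D(z)$), while for unbounded $D$ one has $m_D(z)=1/\delta_D(z)$ identically. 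Combining these two observations gives $m_D(z)\geq 1/|z-a|$ on $D$, so it suffices to show $\int_\gamma |dz|/|z-a|\geq 2\pi$.

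For that estimate I would pass to polar coordinates centred at $a$, writing $z-a=\rho e^{i\theta}$; along $\gamma$ this gives $|dz|=\sqrt{(d\rho)^2+\rho^2(d\theta)^2}\geq \rho\,|d\theta|$, hence $|dz|/|z-a|\geq|d\theta|$. Since $d\theta$ is the globally defined angular one-form with $\int_\gamma d\theta=2\pi\,n(\gamma,a)$, integrating yields
\begin{equation*}
\int_\gamma\frac{|dz|}{|z-a|}\ \geq\ \int_\gamma|d\theta|\ \geq\ \left|\int_\gamma d\theta\right|\ =\ 2\pi\,|n(\gamma,a)|\ \geq\ 2\pi ,
\end{equation*}
which, together with $m_D(z)\geq 1/|z-a|$, proves the asserted inequality.

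For sharpness I would compute directly with $D=\mathbb{D}_R^*$ and $\gamma=\{|z|=r\}$, $0<r\leq R/2$. By \eqref{pncrd diks R}, on this circle $m_{\mathbb{D}_R^*}(z)=2R/(r(2R-r))$, so
\begin{equation*}
\int_\gamma m_{\mathbb{D}_R^*}(z)\,|dz|\ =\ 2\pi r\cdot\frac{2R}{r(2R-r)}\ =\ \frac{4\pi R}{2R-r}\ =\ 2\pi+\frac{2\pi r}{2R-r},
\end{equation*}
which tends to $2\pi$ as $R\to\infty$ with $r$ fixed, or as $r\to 0^+$ with $R$ fixed; hence the constant $2\pi$ is best possible. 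The only step I expect to demand care is the selection of $a$: it is the sole place where purely topological input enters (the description of null-homotopic loops in planar domains, together with clean bookkeeping so that $|n(\gamma,a)|\geq 1$), whereas the pointwise bound $m_D(z)\geq 1/|z-a|$ and the inequality $|dz|\geq\rho\,|d\theta|$ are elementary.
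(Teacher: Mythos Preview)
Your proof is correct and follows essentially the same route as the paper: pick a complementary point $a$ around which $\gamma$ winds nontrivially, compare $m_D(z)$ with the quasihyperbolic density $1/|z-a|$ of $\mathbb{C}^*_a$, and then bound $\int_\gamma |dz|/|z-a|$ below by $2\pi$. If anything, your version is slightly cleaner---you justify the final inequality via the angular one-form and winding number rather than the paper's informal comparison with a circle $|z|=s$, and you handle the bounded/unbounded dichotomy for $m_D\geq 1/\delta_D$ explicitly rather than invoking the monotonicity lemma (which, as stated, is only proved for bounded subdomains).
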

\begin{proof}
Without loss of generality, we assume that any one of the holes of $D$ surrounded by the non-trivial curve $\gamma$ contains the origin. Then the punctured plane $\mathbb{C}^*$ will be the largest planar multiply connected domain containing $D$. Thus, by Lemma \ref{monotncty}, we have 
 	\begin{align*}
 		k_{\mathbb{C}^*}(z)=m_{\mathbb{C}^*}(z)\leq m_D(z)
 	\end{align*}
 	for all $z$ in $D$ and the inequality is sharp. Therefore, we obtain
 	\begin{align*}
 		\int_{\gamma}m_D(z)|dz|\geq \int_{\gamma}k_{\mathbb{C}^*}(z)|dz|
 		=\int_{\gamma}\cfrac{|dz|}{|z|}\geq \int_{|z|=s}\cfrac{|dz|}{|z|}=2\pi,
 	\end{align*}
 	where $s$ is the radius of the smallest circle surrounding $\gamma$. The sharpness is clear from the last step of the above calculation when $R\to\infty$. When $r$ is arbitrarily small, we consider $D=\mathbb{D}_R^*$
 	and $\gamma=\left\{z:|z|=r,~ 0<r\le R/2\right\}$. Indeed, by the formula \eqref{pncrd diks R}, we have
 	$$
 	\int_\gamma m_D(z)\,|dz|=\frac{4R\pi}{2R-r} \to 2\pi \quad \mbox{as $r\to 0$}.
 	$$
 	This completes the proof.
 	\end{proof}

\noindent\begin{proposition}\label{mD lnth of C_R}
   Let $C_r$ be a circle centered at the origin with radius $r$. Then the following are true:
    \begin{enumerate}
   \item The $m$-perimeter of $C_r$ inside the annulus $A_R$ is given by
    \begin{align*}
    	m_{A_R}\left(C_r\right)=\begin{cases}
    		4\pi\cfrac{R^3r}{(Rr-1)(2R^2-Rr+1)},& \text{if $\cfrac{1}{R}<r\leq \cfrac{1+R^2}{2R}$},\\
    		4\pi\cfrac{Rr}{R^2-r^2},& \text{if $\cfrac{1+R^2}{2R}\leq r<R$}.
    	\end{cases}
    \end{align*}
    \item The $m$-perimeter of $C_r$ inside the punctured disk $\mathbb{D}_R^*$ is given by
    \begin{align*}
    	m_{\mathbb{D}_R^*}\left(C_r\right)=\begin{cases}
    		4\pi\cfrac{Rr}{r(2R-r)},  &\text{if $0<r\leq \cfrac{R}{2}$},\\
    		4\pi\cfrac{Rr}{R^2-r^2},  &\text{if $\cfrac{R}{2}\leq r<R$}.
    	\end{cases}
    \end{align*}
\item For the case $\mathbb{C}^*=A_{\infty}=\mathbb{D}_{\infty}^*$, we have $m_{\mathbb{C}^*}\left(C_r\right)=k_{\mathbb{C}^*}\left(C_r\right)=2\pi$, for any $r$.
\end{enumerate}
\end{proposition}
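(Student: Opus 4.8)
The plan is to exploit the rotational symmetry of the three domains involved. Each of $A_R$, $\mathbb{D}_R^*$, and $\mathbb{C}^*$ is invariant under rotations about the origin, so $\delta_D(z)$ depends only on $|z|$, and hence so does the density $m_D(z)$. Consequently, along the circle $C_r=\{z:|z|=r\}$ the density is constant, equal to its value at any point with $|z|=r$, and therefore
$$
m_D(C_r)=\int_{C_r}m_D(z)\,|dz|=m_D(z)\big|_{|z|=r}\cdot\ell(C_r)=2\pi r\, m_D(z)\big|_{|z|=r}.
$$
Everything then reduces to plugging in the density formulas already established.

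For part (1), I would substitute into this identity the density $m_{A_R}(z)$ from Corollary \ref{anulusmD}: when $1/R<r\le(1+R^2)/(2R)$ this value is $2R^3/[(Rr-1)(2R^2-Rr+1)]$, producing $m_{A_R}(C_r)=4\pi R^3 r/[(Rr-1)(2R^2-Rr+1)]$; when $(1+R^2)/(2R)\le r<R$ it is $2R/(R^2-r^2)$, producing $4\pi Rr/(R^2-r^2)$. Part (2) is handled in exactly the same way using the density $m_{\mathbb{D}_R^*}(z)$ from \eqref{pncrd diks R}, with the split occurring at $r=R/2$: on $0<r\le R/2$ the density is $2R/[r(2R-r)]$, giving $4\pi Rr/[r(2R-r)]$, and on $R/2\le r<R$ it agrees with the annular case, giving $4\pi Rr/(R^2-r^2)$.

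For part (3), I would note that $\mathbb{C}^*=\mathbb{C}\setminus\{0\}$ is unbounded, so by the limiting computation following \eqref{kmetric} the $m$-density coincides with the quasihyperbolic density; since $\partial\mathbb{C}^*=\{0\}$ we have $\delta_{\mathbb{C}^*}(z)=|z|$, whence $m_{\mathbb{C}^*}(z)=k_{\mathbb{C}^*}(z)=1/|z|$. The displayed identity then gives $m_{\mathbb{C}^*}(C_r)=2\pi r\cdot(1/r)=2\pi$ for every $r>0$, independent of $r$.

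There is no real obstacle in this proposition: the only things to verify are that the density is genuinely a function of $|z|$ alone (immediate from the rotational symmetry of $\delta_D$) and that the two pieces of each piecewise formula yield the claimed expressions on their stated ranges, which is a direct substitution. As a consistency check one may also observe that the two branches in (1), and likewise in (2), take the same value at the common endpoint of the two intervals, so the stated $m$-perimeter is well defined there.
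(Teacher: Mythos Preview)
Your proposal is correct and follows essentially the same approach as the paper: the paper parametrizes $C_r$ by $z=r e^{i\theta}$ and substitutes into the length formula, which amounts exactly to your observation that the density is constant on $C_r$ and $m_D(C_r)=2\pi r\cdot m_D(z)\big|_{|z|=r}$. The only minor difference is in part~(3): the paper obtains $2\pi$ by letting $R\to\infty$ in the formulas from (1) or (2), whereas you compute directly from $m_{\mathbb{C}^*}(z)=k_{\mathbb{C}^*}(z)=1/|z|$; both are immediate.
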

\begin{proof}
	Both the results follow by considering the parametrization of the circle $|z|=r$ as $z=r\,e^{i\theta}$, $0\leq\theta<2\pi$, and substituting it in the respective length formula.
For the case $R\rightarrow\infty$, we consider the circle $C_r$ of radius $r$ satisfying ${1}/{R}<r\leq{(1+R^2)}/{2R}$ in $A_R$ or satisfying $0<r\leq R/2$ in $\mathbb{D}_R^*$, to get the conclusion.
\end{proof}

\begin{corollary}
	For any two points $z_1$ and $z_2$ with $|z_1|=|z_2|=r<R$, we have the following estimates. 
	\begin{enumerate}
		\item If $z_1,z_2\in A_R$, then
\begin{align*}
		m_{A_R}(z_1,z_2)\leq \begin{cases}
			4\pi\cfrac{R^3r}{(Rr-1)(2R^2-Rr+1)},&  \text{if $\cfrac{1}{R}<r\leq \cfrac{1+R^2}{2R},$}\\[4mm]
			4\pi\cfrac{Rr}{R^2-r^2},&  \text{if $\cfrac{1+R^2}{2R}\leq r<R$},
		\end{cases}
\end{align*}
		\item If $z_1,z_2\in \mathbb{D}_R^*$, then
\begin{align*}
	m_{\mathbb{D}_R^*}(z_1,z_2)\leq \begin{cases}
		4\pi\cfrac{Rr}{r(2R-r)},&  \text{if $0<r\leq \cfrac{R}{2},$}\\[4mm]
		4\pi\cfrac{Rr}{R^2-r^2},&  \text{if $\cfrac{R}{2}\leq r<R$}.
	\end{cases}
\end{align*}
\item For the case $\mathbb{C}^*=A_{\infty}=\mathbb{D}_{\infty}^*,$ $$m_{\mathbb{C}^*}(z_1,z_2)=k_{\mathbb{C}^*}(z_1,z_2)\leq 2\pi.$$
\end{enumerate}
	
\end{corollary}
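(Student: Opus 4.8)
The plan is to exploit the fact that both points already lie on the circle $C_r=\{z:|z|=r\}$, so an arc of that circle is an admissible competitor in the infimum defining the $m_D$-distance, and its length is controlled by the full $m$-perimeter of $C_r$, which has been computed in Proposition~\ref{mD lnth of C_R}.

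First I would note that, since $|z_1|=|z_2|=r$, both $z_1$ and $z_2$ lie on $C_r$; and since the hypotheses force $1/R<r<R$ in the annulus case and $0<r<R$ in the punctured-disk case, the whole circle $C_r$ lies inside $A_R$, respectively inside $\mathbb{D}_R^*$. Deleting $z_1$ and $z_2$ from $C_r$ leaves at most two arcs; choosing either of the two closed arcs gives a rectifiable path $\gamma$ from $z_1$ to $z_2$ contained in the domain (if $z_1=z_2$ the asserted inequality is trivial, since its left-hand side is $0$). Then, directly from the definition \eqref{mDdefn} together with positivity of the density $m_{A_R}(z)$ and the inclusion $\gamma\subset C_r$,
\[
m_{A_R}(z_1,z_2)\ \le\ \int_{\gamma}m_{A_R}(z)\,|dz|\ \le\ \int_{C_r}m_{A_R}(z)\,|dz|\ =\ m_{A_R}(C_r),
\]
and invoking Proposition~\ref{mD lnth of C_R}(1) splits the right-hand side into the two stated cases according to whether $1/R<r\le(1+R^2)/(2R)$ or $(1+R^2)/(2R)\le r<R$.

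Running the identical argument with $\mathbb{D}_R^*$ in place of $A_R$ and appealing to Proposition~\ref{mD lnth of C_R}(2) yields part (2), and doing the same in $\mathbb{C}^*$, where $m_{\mathbb{C}^*}=k_{\mathbb{C}^*}$ and Proposition~\ref{mD lnth of C_R}(3) gives perimeter $2\pi$, yields part (3); alternatively part (3) follows from parts (1)--(2) by letting $R\to\infty$. There is no genuine obstacle here: the only points requiring a word of care are that the chosen arc really lies in the domain (immediate from the radius constraints) and that one is estimating by the \emph{whole} perimeter rather than the shorter half-arc, which only weakens the bound and is therefore harmless.
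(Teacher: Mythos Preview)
Your proof is correct and follows essentially the same approach as the paper: both bound $m_D(z_1,z_2)$ by the full $m$-perimeter of the circle $C_r$ and then invoke Proposition~\ref{mD lnth of C_R}. The paper additionally remarks that part~(3) is confirmed by the explicit formula $k_{\mathbb{C}^*}(z_1,z_2)=\sqrt{\alpha^2+\log(|z_1|/|z_2|)}$ from Martin--Osgood, but this is supplementary rather than a different method.
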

\begin{proof}
	For any two given points $z_1,z_2\in A_R$ (or $\mathbb{D}_R^*$) with $|z_1|=|z_2|=r<R$, we choose the circle $C_r$ such that the two points lie on it. The results are followed by Proposition \ref{mD lnth of C_R}.
	Note that, in the case of $\mathbb{C}^*$, the bound for $m_{\mathbb{C}^*}(z_1,z_2)$ is also evident from the well-known formula (cf. \cite[Section~2]{MO})
		\begin{align*}
m_{\mathbb{C}^*}(z_1,z_2)=		k_{\mathbb{C}^*}(z_1,z_2)=\sqrt{\alpha^2+\log\cfrac{|z_1|}{|z_2|}}
	\end{align*}
	of the quasihyperbolic metric on $\mathbb{C}^*$, where $\alpha=\measuredangle(z_1,0,z_2)\in\left[0,\pi\right]$.
\end{proof}

\section{\bf Concluding remarks}\label{Concluding remarks}
The goal of this manuscript was to introduce a new metric that generalizes both the hyperbolic and the quasihyperbolic metrics in the sense that it agrees with the hyperbolic metric in balls and with the quasihyperbolic metric in any unbounded domain. Moreover, the new metric is bi-Lipschitz equivalent to the quasihyperbolic metric in arbitrary bounded domains and to the hyperbolic metric in simply connected planar domains. It would be interesting to know if there exists a metric different from the metric $m_D$ with the property that it agrees with the quasihyperbolic metric in unbounded domains and in bounded domains, they are bi-Lipschitz equivalent.

In Section \ref{qsi-inv}, the quasi-invariance properties of the $m_D$-metric under M\"{o}bius, conformal, and quasiconformal maps are described; however, we believe that the constants for each case can be improved. Also, if we go one step further, another interesting problem could be to characterize isometries of the $m_D$-metric. Intuitively, for some cases (e.g. balls), isometries are the M\"{o}bius transformations and for the unbounded domains they are exactly isometries of the quasihyperbolic metric (cf. \cite{Ha07}).

We further investigated the existence of $m_D$-geodesics; however, their uniqueness remains an open question. Since geodesics are solutions to certain differential equations, their local uniqueness is generally assured. Nevertheless, can one also establish global uniqueness, either in arbitrary domains or at least within particular classes of domains? For related results concerning the quasihyperbolic metric, we refer the reader to the work of Rasila and Talponen \cite{RT}.

\section*{\bf Acknowledgements}
The work of the second author is supported by the University Grants Commission (UGC) with Ref. No.: 221610091493. 

\section*{\bf Declaration}
\noindent
{\bf Conflict of interest:} The authors declare that there is no conflict of interest regarding the publication of this article.

\end{document}